\documentclass[reqno,11pt]{amsart}
\usepackage{latexsym}
\usepackage{amsmath,amsfonts,amssymb,epsfig}
\usepackage{amsthm, amscd}
\usepackage{xcolor}
\usepackage{setspace}
\usepackage{overpic}
\usepackage{eucal}
\usepackage{mathrsfs}
\usepackage{wrapfig}
    
 

 
%
%

\setlength{\oddsidemargin}{0in}
\setlength{\evensidemargin}{0in}
\setlength{\topmargin}{-0.25in}
\setlength{\textheight}{9in}
\setlength{\textwidth}{6.5in}

\parskip = 0.01in
\linespread{1.1}


\newtheorem*{rep@theorem}{\rep@title}
\newcommand{\newreptheorem}[2]{%
	\newenvironment{rep#1}[1]{%
		\def\rep@title{#2 \ref{##1}}%
		\begin{rep@theorem}}%
		{\end{rep@theorem}}}


\newtheorem{thm}{Theorem}
\newreptheorem{thm}{Theorem}

\newtheorem{cor}[thm]{Corollary}
\newtheorem{lem}[thm]{Lemma}

\newtheorem{rem}[thm]{Remark}

\newtheorem{prob}{Question}

\newcommand{\no}{\noindent}

\def\crn{\operatorname{\mathsf{cr}}} 
\def\Crn{\operatorname{\mathsf{Cr}}} 
\def\acr{\operatorname{\mathsf{acr}}}

\def\aov{\operatorname{\mathsf{aov}}}

\def\PCrn{\operatorname{\mathsf{PCr}}}

\def\Ac{\operatorname{\mathsf{Ac}}}

\def\rop{\operatorname{\mathsf{rop}}} 
\def\Rop{\operatorname{\mathsf{Rop}}} 
\def\lk{\operatorname{\mathsf{Lk}}} 
\def\thk{\operatorname{\tau}}

\def\Ln{\ell}

\def\R{\mathbb{R}}

\def\Z{\mathbb{Z}}

\def\Cnf{\text{\rm Conf}}
\def\sign{\text{\rm sign}}



\numberwithin{equation}{section} 


\newcommand{\vcenteredinclude}[1]{\begingroup
\setbox0=\hbox{\includegraphics[scale=.5]{#1}}%
\parbox{\wd0}{\box0}\endgroup}

\newcommand{\vvcenteredinclude}[2]{\begingroup
\setbox0=\hbox{\includegraphics[scale=#1]{#2}}%
\parbox{\wd0}{\box0}\endgroup}  


\title[Ropelength and finite type invariants]{Ropelength, crossing number and finite type\\ invariants of links}

\author{R. Komendarczyk}
\address{
	Tulane University,
	New Orleans, Louisiana 70118 } 
\email{rako@tulane.edu}

\thanks{Supported by NSF DMS 1043009 and DARPA YFA N66001-11-1-4132 during the years 2011-2015} 

\author{A. Michaelides}

\address{
	University of South Alabama,
	Mobile, AL 36688 } 
\email{amichaelides@southalabama.edu}

\begin{document}

\begin{abstract}
	{\em Ropelength} and {\em embedding thickness} are related measures of geometric complexity of classical knots and links in Euclidean space. In their recent work, Freedman and Krushkal posed a question regarding lower bounds for embedding thickness of $n$-component links in terms of the Milnor linking numbers. The main goal of the current paper is to provide such estimates, and thus generalizing the known linking number bound. In the process, we collect several facts about finite type invariants and ropelength/crossing number of knots. We give examples of families of knots, where such estimates behave better than the well known knot--genus estimate.
 \end{abstract}

\maketitle

\section{Introduction}\label{S:intro}

 Given an $n$--component link (we assume class $C^1$ embeddings) in $3$--space 
 \begin{equation}\label{eq:n-link}
  L:S^1\sqcup\ldots \sqcup S^1\longrightarrow \R^3,\qquad L=(L_1, L_2,\ldots, L_n),\quad L_i=L|_{\text{the $i$'th circle}},
 \end{equation}  
its {\em ropelength} $\rop(L)$ is the ratio $\rop(L)=\frac{\Ln(L)}{r(L)}$ of length $\Ln(L)$, which is a sum of lengths of individual components of $L$,  to  {\em reach} or {\em thickness}: $r(L)$, i.e. the largest radius of the tube embedded as a normal neighborhood of $L$. The {\em ropelength within the isotopy class} $[L]$ of $L$ is defined as    
 \begin{equation}\label{eq:Rop-def}
  \Rop(L)=\inf_{L'\in [L]} \rop(L'),\qquad \rop(L')=\frac{\Ln(L')}{r(L')},
 \end{equation}
 (in \cite{Cantarella-Kusner-Sullivan:2002} it is shown that the infimum is achieved within $[L]$ and the minimizer is of class $C^{1,1}$).
 A related measure of complexity, called {\em embedding thickness} was introduced recently in \cite{Freedman-Krushkal:2014}, in the general context of embeddings' complexity. For links, the embedding thickness $\tau(L)$ of $L$ is given by a value of its reach $r(L)$ assuming that $L$ is a subset of the unit ball $B_1$ in $\R^3$ (note that any embedding can be scaled and translated to fit in $B_1$). Again, the embedding thickness of the isotopy class $[L]$ is given by
 \begin{equation}\label{eq:thck-def}
 \mathcal{T}(L)=\sup_{L'\in [L]} \tau(L').
 \end{equation}   
 For a link $L\subset B_1$, the volume of the embedded tube of radius $\tau(L)$ is $\pi \Ln(L)\tau(L)^2$, \cite{Gray:2004} and the tube is contained in the ball of radius $r=2$, yielding
 \begin{equation}\label{eq:t(L)-rop(L)}
  \rop(L)=\frac{\pi\Ln(L)\tau(L)^2}{\pi\tau(L)^3}\leq \frac{\frac 43 \pi 2^3}{\pi\tau(L)^3},\quad \Rightarrow\quad \tau(L)\leq \Bigl( \frac{11}{\rop(L)}\Bigr)^{\frac 13}.
 \end{equation}
 In turn a lower bound for $\rop(L)$ gives an upper bound for $\tau(L)$ and vice versa.
For other measures of complexity of embeddings such as distortion or Gromov-Guth thickness see e.g. \cite{Gromov:1983}, \cite{Gromov-Guth:2012}.

 Bounds for the ropelength of knots, and in particular the lower bounds,  have been studied by many researchers, we only list a small fraction of these works here
\cite{Buck-Simon:1999,Buck-Simon:2007,Cantarella-Kusner-Sullivan:2002,Diao-Ernst-Janse-van-Rensburg:1999,Diao-Ernst:2007,Diao:2003,Ernst-Por:2012,Litherland-Simon-Durumeric-Rawdon:1999,Rawdon:1998, Ricca-Maggioni:2014, Maggioni-Ricca:2009, Ricca-Moffatt:1992}. Many of the results are applicable directly to links, but the case of links is treated in more detail by Cantarella, Kusner and Sullivan \cite{Cantarella-Kusner-Sullivan:2002} and in the earlier work of Diao,  Ernst, and Janse Van Rensburg
\cite{Diao-Ernst-Janse-Van-Rensburg:2002} concerning the estimates in terms of the pairwise linking number. In \cite{Cantarella-Kusner-Sullivan:2002}, the authors introduce a cone surface technique and show the following estimate, for a link $L$ (defined as in \eqref{eq:n-link}) and a given component $L_i$ \cite[Theorem 11]{Cantarella-Kusner-Sullivan:2002}:
\begin{equation}\label{eq:len(L_i)-lk}
\rop(L_i)\geq 2\pi+2\pi\sqrt{\lk(L_i,L)},
\end{equation}
where $\lk(L_i,L)$ is the maximal total linking number between $L_i$ and the other components of $L$. A stronger estimate was obtained in  \cite{Cantarella-Kusner-Sullivan:2002} by combining the Freedman and He \cite{Freedman-He:1991} asymptotic crossing number bound for energy of divergence free fields and the cone surface technique as follows
\begin{equation}\label{eq:len(L_i)-Ac-gen}
\rop(L_i)\geq 2\pi+2\pi\sqrt{\Ac(L_i,L)},\qquad \rop(L_i)\geq 2\pi+2\pi\sqrt{2g(L_i,L)-1},
\end{equation}
where $\Ac(L_i,L)$ is the {\em asymptotic crossing number} (c.f. \cite{Freedman-He:1991}) and the second inequality is a consequence of the estimate $\Ac(L_i,L)\geq 2g(L_i,L)-1$, where $g(L_i,L)$ is a minimal genus among surfaces embedded in $\R^3\setminus\{L_1\cup\ldots\cup\widehat{L_i}\cup\ldots\cup L_n\}$, \cite[p. 220]{Freedman-He:1991} (in fact, Estimate \eqref{eq:len(L_i)-Ac-gen} subsumes Estimate \eqref{eq:len(L_i)-lk} since $\Ac(L_i,L)\geq \lk(L_i,L)$). 
 A relation between $\Ac(L_i,L)$ and the higher linking numbers of Milnor, \cite{Milnor:1954, Milnor:1957} is unknown and appears difficult.
The following question, concerning the embedding thickness, is stated in \cite[p. 1424]{Freedman-Krushkal:2014}:
\begin{prob}\label{q:F-K}
	Let $L$ be an $n$-component link which is Brunnian (i.e. almost trivial in the sense of Milnor \cite{Milnor:1954}). Let $M$ be the maximum value among Milnor's $\bar{\mu}$-invariants with distinct indices i.e. among $|\bar{\mu}_{\mathtt{I};j}(L)|$. Is there a bound 
	\begin{equation}\label{eq:thk-FK}
	\thk(L) \leq c_n M^{-\frac{1}{n}},
	\end{equation}
	for some constant $c_n > 0$, independent of the link $L$? Is there a bound on the crossing number $\Crn(L)$ in terms of $M$?
\end{prob}
\no Recall that the Milnor $\bar{\mu}$--invariants $\{\bar{\mu}_{\mathtt{I};j}(L)\}$ of $L$, are indexed by an ordered tuple $(\mathtt{I};j)=(i_1,i_2,\ldots,i_k;$ $j)$  from the index set $\{1,\ldots,n\}$, there the last index $j$ has a special role (see below). If all the indexes in $(\mathtt{I};j)$ are distinct,  $\{\bar{\mu}_{\mathtt{I};j}\}$ are link homotopy invariants of $L$ and are often referred to simply as {\em Milnor linking numbers} or {\em higher linking numbers}, \cite{Milnor:1954, Milnor:1957}.  The definition $\{\bar{\mu}_{\mathtt{I};j}\}$ begins with coefficients $\mu_{\mathtt{I};j}$ of the Magnus expansion of the $j$th longitude of $L$ in $\pi_1(\R^3-L)$. Then 
\begin{equation}\label{eq:bar-mu-Delta(I)}
\overline{\mu}_{\mathtt{I};j}(L) \equiv \mu_{\mathtt{I};j}(L) \mod\ \Delta_\mu(\mathtt{I};j),\qquad\quad \Delta_\mu(\mathtt{I};j)=\gcd(\Gamma_\mu(\mathtt{I};j)).
\end{equation}
where $\Gamma_\mu(\mathtt{I};j)$ is a certain subset of lower order Milnor invariants, c.f.  \cite{Milnor:1957}. Regarding $\bar{\mu}_{\mathtt{I};j}(L)$ as an element of $\Z_d=\{0,1,\ldots,d-1\}$, $d=\Delta_\mu(\mathtt{I};j)$ (or $\Z$, whenever $d=0$) let us set 
\begin{equation}\label{eq:[mu]}
	\Big[\bar{\mu}_{\mathtt{I};j}(L)\Big]:=\begin{cases}
	\min\Bigl(\bar{\mu}_{\mathtt{I};j},d-\bar{\mu}_{\mathtt{I};j}\Bigr) &\ \text{for $d> 0$},\\
	|\bar{\mu}_{\mathtt{I};j}| & \ \text{for $d= 0$}.
	\end{cases}
\end{equation}
\no Our main result addresses Question \ref{q:F-K} for general $n$--component links (deposing of the Brunnian assumption) as follows.
\begin{thm}\label{thm:main}
	Let $L$ be an $n$-component link $n\geq 2$ and $\bar{\mu}(L)$ one of its top Milnor linking numbers, then
	\begin{equation}\label{eq:mthm-rop-cr}
	\rop(L)^{\frac 43}\geq \sqrt[3]{n}  \Bigl(\Big[\bar{\mu}(L)\Big]\Bigr)^{\frac{1}{n-1}},\qquad  \Crn(L) \geq \tfrac{1}{3}(n-1) \Bigl(\Big[\bar{\mu}(L)\Big]\Bigr)^{\frac{1}{n-1}}.
	\end{equation}
\end{thm}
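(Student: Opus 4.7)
The plan is to derive both estimates from a single finite-type polynomial bound on $[\bar\mu(L)]$ in terms of the crossing number, and then convert the crossing-number estimate into the ropelength estimate via a Buck--Simon style inequality $\Crn(L)\le C_0\,\rop(L)^{4/3}$.

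First, I would invoke the result (Lin, Bar-Natan, Habegger--Masbaum) that the top Milnor number $\mu_{\mathtt I;j}(L)$ with all $n$ indices distinct is a finite type invariant of order $n-1$ on string links, which descends to links after reduction modulo $\Delta_\mu(\mathtt I;j)$. Together with the standard Vassiliev crossing-number estimate, sharpened by an AM--GM / convexity optimisation over the $n-1$ indices, this should give for every diagram $D$ of $L$ with $c(D)$ crossings
\[
[\bar\mu(L)]\;\le\;\Bigl(\tfrac{3\,c(D)}{n-1}\Bigr)^{\!n-1}.
\]
Minimising $c(D)$ over all diagrams and rearranging produces the second inequality of the theorem directly.

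For the ropelength bound I would then take any $C^{1,1}$ representative of the isotopy class, project generically to obtain a diagram with $c(D)\le C_0\,\rop(L)^{4/3}$ crossings (Buck--Simon), and substitute into the display above. Solving for $\rop(L)$ yields $\rop(L)\ge\bigl((n-1)/(3 C_0)\bigr)^{3/4}\,[\bar\mu(L)]^{3/(4(n-1))}$, and with the appropriate $C_0$ from the literature the numerical prefactor can be arranged to dominate $\sqrt[4]{n}$.

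The technical heart of the argument is the explicit polynomial inequality of the first step. The abstract finite-type property only guarantees a bound of the shape $[\bar\mu(L)]\le C_n\,\Crn(L)^{n-1}$ with no information on the constant, whereas the theorem requires the closed form $C_n=(3/(n-1))^{n-1}$. I expect this to demand an induction on $n$: the inductive step would express the change of $\mu_{\mathtt I;j}$ under a single crossing change as a sum of products of lower-order Milnor numbers, via the Magnus expansion of the longitude in $\pi_1(\R^3-L)$, and one must then track the descent from $\mu$ through the modular reduction $\bar\mu$ to the quantity $[\bar\mu]$ so that the bound survives in the quotient $\Z_d$. This descent, together with sharp constant extraction at each stage of the lower central series filtration, is where I expect the main difficulty to lie.
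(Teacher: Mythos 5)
Your proposal leaves the central quantitative step unproven, and the route you sketch for it is unlikely to work as stated. The inequality $[\bar\mu(L)]\le\bigl(3\,c(D)/(n-1)\bigr)^{n-1}$ is exactly the heart of the theorem, and the abstract finite-type property cannot deliver it: the top Milnor invariants of \emph{closed} links are not finite type invariants in the usual sense (they are only defined modulo the indeterminacy $\Delta_\mu$), so Bar-Natan's $O(\Crn^n)$ bound does not apply directly, and your proposed induction via crossing changes and the Magnus expansion would have to control how the indeterminacy subgroup itself changes under crossing changes, which you acknowledge but do not resolve. The paper takes a completely different and much more concrete route: it uses the Gauss-diagram (arrow polynomial) formula $\bar\mu_{n;1}(L)\equiv\langle Z_{n;1},G_L\rangle \bmod \Delta_\mu$ of Kravchenko--Polyak/Komendarczyk--Michaelides, bounds each tree term by a product of pairwise crossing numbers $\prod_k \crn_{i_k,j_k}(D_L)$, counts the terms by $\binom{N}{n-1}$ with $N=\binom{n}{2}$, and applies an elementary symmetric-function/convexity lemma to get $|\langle Z_{n;1},G_L\rangle|\le \frac{1}{N^{n-1}}\binom{N}{n-1}\bigl(\sum_{i<j}\crn_{i,j}(D_L)\bigr)^{n-1}$; the indeterminacy is then handled simply by $[\bar\mu]\le|\bar\mu+kd|=|\langle Z_{n;1},G_L\rangle|$, and the constant $\frac{1}{3}(n-1)$ comes from a Stirling estimate of this combinatorial constant, not from any crossing-change analysis.

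Even granting your unproven display, the second half of your plan does not recover the stated ropelength constant. Composing $\Crn(L)\ge\frac{1}{3}(n-1)[\bar\mu]^{1/(n-1)}$ with the Buck--Simon bound $\Crn(L)\le\frac{11}{4\pi}\rop(L)^{4/3}$ gives $\rop(L)^{4/3}\ge\frac{4\pi(n-1)}{33}[\bar\mu]^{1/(n-1)}$, and $\frac{4\pi(n-1)}{33}<n^{1/3}$ for $n\le 5$, so the prefactor $\sqrt[4]{n}$ is not ``arranged from the literature'' for small $n$; in particular for $n=2$ your route gives roughly $0.49\,[\bar\mu]^{3/4}$ versus the claimed $2^{1/4}[\bar\mu]^{3/4}$. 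The paper avoids this loss because only crossings between \emph{distinct} components enter the bound for $Z_{n;1}$: it integrates the diagram-level inequality over all projection directions and invokes its Lemma \ref{lem:aov-bound}, where unit thickness forces distinct components to stay at distance at least $2$, yielding the sharper illumination constant, and then applies Jensen's inequality; routing the argument through the total minimal crossing number (which includes self-crossings) with the generic knot constant discards precisely this gain. So both the key inequality and the constant-tracking in the ropelength step are genuine gaps.
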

\no In the context of Question \ref{q:F-K}, the estimate of Theorem \ref{thm:main} transforms, using \eqref{eq:t(L)-rop(L)}, as follows
\[
 \tau(L)\leq \Bigl(\frac{11}{\sqrt[4]{n}}\Bigr)^{\frac 13} M^{-\frac{1}{4(n-1)}} .
\]
Naturally, Question \ref{q:F-K} can be asked for knots and links and lower bounds in terms of finite type invariants in general. Such questions have been raised for instance in \cite{Cantarella:2009}, where the Bott-Taubes integrals \cite{Bott-Taubes:1994, Volic:2007} have been suggested as a tool for obtaining estimates.
\begin{prob}\label{q:F-K-general}
	Can we find estimates for ropelength of knots/links, in terms of their finite type invariants?
\end{prob}
In the remaining part of this introduction let us sketch the basic idea behind our approach to Question \ref{q:F-K-general}, which relies on the relation between the finite type invariants and the crossing number.

Note that since $\rop(K)$ is scale invariant, it suffices to consider {\em unit thickness knots}, i.e. $K$ together with the unit radius tube neighborhood (i.e. $r(K)=1$). In this setting, $\rop(K)$  just equals the {\em length} $\Ln(K)$ of $K$. From now on we assume unit thickness, unless stated otherwise.  In \cite{Buck-Simon:1999}, Buck and Simon gave the following estimates for  $\Ln(K)$, in terms of the crossing number $\Crn(K)$ of $K$:
\begin{equation}\label{eq:L(K)-BS}
\Ln(K)\geq \Bigr(\frac{4\pi}{11} \Crn(K)\Bigl)^{\frac 34},\qquad \Ln(K)\geq 4\sqrt{\pi \Crn(K)}.
\end{equation}
Clearly, the first estimate is better for knots with large crossing number, while the second one can be sharper for low crossing number knots (which manifests itself for instance in the case of the trefoil). 
Recall that $\Crn(K)$ is a minimal crossing number over all possible knot diagrams of $K$ within the isotopy class of $K$. The estimates in \eqref{eq:L(K)-BS} are a direct consequence of the ropelength bound for the {\em average crossing number}\footnote{i.e. an average of the crossing numbers of diagrams of $K$ over all projections of $K$, see Equation \eqref{eq:aov-acr}.} $\acr(K)$ of $K$ (proven in \cite[Corollary 2.1]{Buck-Simon:1999}) i.e. 
\begin{equation}\label{eq:rop-BS-acr}
\Ln(K)^{\frac 43}\geq \frac{4\pi}{11} \acr(K),\qquad \Ln(K)^{2}\geq 16\pi \acr(K).
\end{equation}

In Section \ref{S:links_ropelength}, we obtain 
an analog of \eqref{eq:L(K)-BS} for $n$--component links ($n\geq 2$) in terms of the {\em pairwise crossing number}\footnote{see \eqref{eq:PCr(L)} and Corollary \ref{cor:rop-PCr}, generally $\PCrn(L)\leq \Crn(L)$, as the individual components can be knotted.}
$\PCrn(L)$, as follows
\begin{equation}\label{eq:PCr-rop-3/4-2}
\Ln(L)\geq  \frac{1}{\sqrt{n-1}}\Bigl(\tfrac 32 \PCrn(L)\Bigr)^{\frac 34},\qquad \Ln(L)\geq \frac{n\sqrt{16\pi}}{\sqrt{n^2-1}}\Bigl(\PCrn(L)\Bigr)^{\frac 12}.
\end{equation}
 For low crossing number knots, the Buck and Simon bound \eqref{eq:L(K)-BS} was further improved by Diao\footnote{More precisely: $16\pi \Crn(K) \leq \Ln(K)(\Ln(K)-17.334)$ \cite{Diao:2003}.} \cite{Diao:2003},
\begin{equation}\label{eq:L(K)-D}
\Ln(K)\geq \tfrac{1}{2} \bigl(d_0 +\sqrt{d^2_0+64\pi \Crn(K)}\bigr),\qquad d_0=10-6(\pi+\sqrt{2})\approx 17.334.
\end{equation}

On the other hand, there are well known estimates for $\Crn(K)$ in terms of finite type invariants of knots. For instance,
\begin{equation}\label{eq:c_2-cr-LW-PV}
\tfrac{1}{4}\Crn(K)(\Crn(K)-1)+\tfrac{1}{24}\geq |c_2(K)|, \qquad  \tfrac{1}{8}(\Crn(K))^2\geq |c_2(K)|.
\end{equation}
Lin and Wang \cite{Lin-Wang:1996} considered the second coefficient of the Conway polynomial $c_2(K)$ (i.e. the first nontrivial type $2$ invariant of knots) and proved the first bound in \eqref{eq:c_2-cr-LW-PV}. The second estimate of \eqref{eq:c_2-cr-LW-PV} can be found in Polyak--Viro's work \cite{Polyak-Viro:2001}.
Further, Willerton, in his thesis \cite{Willerton:2002} obtained estimates for the ``second'', after $c_2(K)$, finite type invariant $V_3(K)$ of type $3$, as 
\begin{equation}\label{eq:V_3-cr}
\tfrac{1}{4}\Crn(K)(\Crn(K)-1)(\Crn(K)-2)\geq |V_3(K)|.
\end{equation}
In the general setting, Bar-Natan \cite{Bar-Natan:1995} shows that if $V(K)$ is a type $n$ invariant then $|V(K)| = O(\Crn(K)^n)$. All these results rely on the arrow diagrammatic formulas for Vassiliev invariants developed in the work of Goussarov, Polyak and Viro \cite{Goussarov-Polyak-Viro:2000}.

Clearly, combining \eqref{eq:c_2-cr-LW-PV} and \eqref{eq:V_3-cr} with \eqref{eq:L(K)-BS} or \eqref{eq:L(K)-D}, immediately yields lower bounds for  ropelength in terms of a given Vassiliev invariant. One may take these considerations one step further and extend the above estimates to  the case of the $2n$\textsuperscript{th} coefficient of the Conway polynomial $c_{2n}(K)$, with the help of arrow diagram formulas for $c_{2n}(K)$, obtained recently in \cite{Chmutov-Duzhin-Mostovoy:2012, Chmutov-Khoury-Rossi:2009}. In Section \ref{S:knots}, we follow the Polyak--Viro's argument of \cite{Polyak-Viro:2001} to obtain
 \begin{thm}\label{thm:conway}
 	Given a knot $K$, we have the following crossing number estimate
 	\begin{equation}\label{eq:crn_conway}
 	\Crn(K) \geq \bigl((2n)! |c_{2n}(K)|\bigl)^{\frac{1}{2n}}\geq\frac{2n}{3} |c_{2n}(K)|^{\frac{1}{2n}}.
 	\end{equation}
 \end{thm}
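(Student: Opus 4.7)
The plan is to follow the Polyak--Viro argument used to prove $|c_2(K)|\leq \Crn(K)^2/8$ in \cite{Polyak-Viro:2001}, replacing its Gauss diagram formula with the arrow-diagram formula for $c_{2n}$ established in \cite{Chmutov-Duzhin-Mostovoy:2012, Chmutov-Khoury-Rossi:2009}. I would fix a knot diagram $D$ of $K$ realizing $c=\Crn(K)$, let $G_D$ denote its Gauss diagram, and apply a formula of the form
\[
c_{2n}(K)=\langle A_n,\, G_D\rangle,
\]
where $A_n$ is a finite integer linear combination of arrow diagrams on $2n$ arrows and $\langle\,\cdot\,,G_D\rangle$ denotes the signed count of realizations of these arrow diagrams as sub-Gauss-diagrams of $G_D$, each sub-configuration weighted by the product of its $2n$ crossing signs.

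The core step is then a direct count of such sub-configurations in $G_D$. An ordered $2n$-tuple of distinct arrows of $G_D$ can be chosen in at most $c(c-1)\cdots(c-2n+1)\leq c^{2n}$ ways, and each matching sub-diagram contributes $\pm 1$. After dividing by the $(2n)!$-fold overcount from ordering the arrows and by the automorphism factors of the arrow diagrams appearing in $A_n$, and then using the explicit description of $A_n$ (which counts certain matched/linked chord patterns with known multiplicities), one should obtain
\[
|c_{2n}(K)|\leq \frac{c^{2n}}{2^{n+1}n!},
\]
which rearranges to the first inequality of \eqref{eq:crn_conway}.

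The second inequality is elementary. Using the crude form $n!\geq (n/e)^n$ of Stirling,
\[
\bigl(2^{n+1}n!\bigr)^{1/(2n)} \;\geq\; \sqrt{2}\cdot \sqrt{n/e} \;=\; \sqrt{2n/e} \;\geq\; \sqrt{2n/3},
\]
since $e<3$, which gives the second inequality of \eqref{eq:crn_conway}.

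I expect the principal obstacle to be pinning down the precise constant $2^{n+1}n!$: this requires matching the multiplicities and automorphism groups of the arrow diagrams appearing in $A_n$ with the $(2n)!$-overcount from ordering, and then folding in the symmetric structure of the Chmutov--Khoury--Rossi formula. In the case $n=1$ this collapses to the factor of $4$ implicit in the Polyak--Viro argument; in general one must carry out the analogous symmetry analysis using the explicit matched-chord description of $A_n$, after which the remaining estimates are routine.
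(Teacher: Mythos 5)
Your overall strategy---apply the Chmutov--Khoury--Rossi formula $c_{2n}(K)=\langle C_{2n},G\rangle$ to a Gauss diagram of a minimal crossing diagram and bound the number of contributing sub-configurations---is the same as the paper's, and your derivation of the second inequality from $n!\ge (n/e)^n$ is correct. However, there is a genuine gap exactly where you flag the ``principal obstacle'': the bound $|c_{2n}(K)|\le \Crn(K)^{2n}/(2^{n+1}n!)$ is asserted, not derived, and the mechanism you propose cannot produce it. The pairing $\langle\,\cdot\,,G\rangle$ counts individual embeddings of based arrow diagrams, so there are no automorphism factors to divide by; and dividing the count of ordered $2n$-tuples by $(2n)!$ only bounds the number of $2n$-element subsets of crossings, giving at best $\binom{c}{2n}\le c^{2n}/(2n)!$---and even that requires an argument you do not supply, namely that each $2n$-subset of arrows can be matched by at most one term of $C_{2n}$ in at most one way. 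Moreover, for $n=1$ this route yields only $|c_2|\le c^2/2$, short of the claimed $c^2/4$, so some structural information about the diagrams in $C_{2n}$ beyond their number of arrows is genuinely needed.

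The paper's proof supplies two such inputs. First, the injectivity just mentioned: an embedding of a term $A_i$ of $C_{2n}$ is determined by the $2n$-subset of arrows it covers, and distinct terms (or distinct embeddings) cover distinct subsets, so the number of contributing configurations is at most the number of admissible subsets. Second---and this is what produces the constant---each ascending arrow diagram in $C_{2n}$ has exactly $n$ arrows oriented with the Wilson loop and $n$ against it; splitting the crossings of the minimal diagram as $\crn(K)=l+r$ into left and right arrows, the number of admissible subsets is at most $\binom{r}{n}\binom{l}{n}$, which is maximized when $l\approx r\approx \crn(K)/2$ and hence is at most $\bigl(\crn(K)/2\bigr)^{2n}/(n!)^2=\crn(K)^{2n}/\bigl(4^n(n!)^2\bigr)\le \crn(K)^{2n}/(2^{n+1}n!)$. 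Your outline contains neither the injectivity argument nor this left/right arrow count, so the first inequality of \eqref{eq:crn_conway} is not established as written; to repair it you would need to prove these two facts (or an equivalent structural bound) rather than appeal to ordering and automorphism corrections.
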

 \no Combining \eqref{eq:crn_conway} with Diao's lower bound \eqref{eq:L(K)-D} one obtains
 \begin{cor}
 	For a unit thickness knot $K$,
 \begin{equation}\label{eq:rop_conway}
 \Ln(K) \geq \tfrac{1}{2} \Bigl(d_0 +\bigl(d^2_0+ \tfrac{128 n \pi}{3} |c_{2n}(K)|^{\frac{1}{2n}}\bigr)^{\frac 12}\Bigr).
 \end{equation}
 \end{cor}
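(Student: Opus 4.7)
The plan is simply to chain together the two inequalities already established in the excerpt. First, invoke Theorem \ref{thm:conway} in its looser but cleaner form, which yields
\[
\Crn(K)\;\geq\;\sqrt{\tfrac{2n}{3}}\,|c_{2n}(K)|^{\frac{1}{2n}}.
\]
Second, substitute this lower bound into Diao's ropelength--crossing number inequality \eqref{eq:L(K)-D}.

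To justify the substitution, observe that the right-hand side of \eqref{eq:L(K)-D}, viewed as the function
\[
f(x)\;=\;\tfrac{1}{2}\bigl(d_0+\sqrt{d_0^2+64\pi x}\bigr),\qquad x\geq 0,
\]
of the real variable $x=\Crn(K)$, is monotonically increasing. Hence any lower bound on $\Crn(K)$ transports to a lower bound on $\Ln(K)$ by composition:
\[
\Ln(K)\;\geq\; f(\Crn(K))\;\geq\; f\!\left(\sqrt{\tfrac{2n}{3}}\,|c_{2n}(K)|^{\frac{1}{2n}}\right),
\]
which, upon expanding $f$, is exactly the asserted inequality \eqref{eq:rop_conway}.

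There is essentially no technical obstacle here: the corollary merely records the composition of Theorem \ref{thm:conway} with Diao's bound, and both factors of the composition have already been supplied. The only hypothesis worth flagging is the unit-thickness assumption on $K$, which is precisely the regime in which \eqref{eq:L(K)-D} is formulated; for a general embedding one would write $\rop(K)$ in place of $\Ln(K)$ after the usual rescaling to $r(K)=1$, and the argument goes through verbatim.
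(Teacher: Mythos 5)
Your proposal is correct and matches the paper's own (one-line) derivation: the corollary is exactly the composition of the weaker bound $\Crn(K)\geq\sqrt{2n/3}\,|c_{2n}(K)|^{1/2n}$ from Theorem \ref{thm:conway} with Diao's inequality \eqref{eq:L(K)-D}, justified by the monotonicity of $x\mapsto\frac12\bigl(d_0+\sqrt{d_0^2+64\pi x}\bigr)$. Your remark about the unit-thickness normalization is also consistent with the paper's conventions.
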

 Recall that a somewhat different approach to ropelength estimates is presented in  \cite{Cantarella-Kusner-Sullivan:2002}, where the authors introduce a cone surface technique, which combined with the asymptotic crossing number, $\Ac(K)$, bound of Freedman and He, \cite{Freedman-He:1991} gives
 \begin{equation}\label{eq:L(K)-AC-g(K)}
 \Ln(K)\geq 2\pi+2\pi\sqrt{\Ac(K)},\qquad \Ln(K)\geq 2\pi+2\pi\sqrt{2 g(K)-1},
 \end{equation}
 where the second bound follows from the knot genus estimate of \cite{Freedman-He:1991}: $\Ac(K)\geq 2 g(K)-1$.
 
  When comparing Estimate \eqref{eq:L(K)-AC-g(K)} and \eqref{eq:rop_conway}, in favor of Estimate \eqref{eq:rop_conway}, we may consider a family of {\em pretzel knots}: $P(a_1,\ldots,a_n)$ where $a_i$ is the number of signed crossings in the $i$th tangle of the diagram, see Figure \ref{fig:pretzel_knots}. Additionally, for a diagram $P(a_1,\ldots,a_n)$ to represent a knot one needs to assume either both $n$ and all $a_i$ are odd or one of the $a_i$ is even, \cite{Kawauchi:1996}. 
  
 \begin{wrapfigure}{l}{7cm}
 	\centering
 	\includegraphics[width=.35\textwidth]{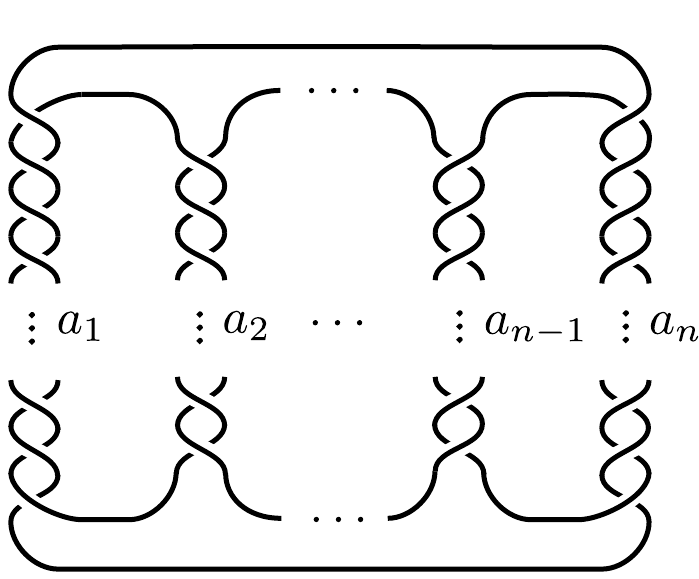}
 	\caption{$P(a_1,\ldots,a_n)$ pretzel knots.}\label{fig:pretzel_knots}
 \end{wrapfigure}
 Genera of selected subfamilies of pretzel knots are known, for instance \cite[Theorem 13]{Manchon:2012} implies  
 \begin{align*}
 \qquad & \qquad & \qquad & \qquad & g(P(a,b,c)) =1,\\
 \qquad & \qquad & \qquad & \qquad & c_2(P(a,b,c))=\frac 14 (ab+ac+bc+1),
 \end{align*}
 where $a$, $b$, $c$ are odd integers with the same sign
 (for the value of $c_2(P(a,b,c))$ see a table in \cite[p. 390]{Manchon:2012}). Concluding, the lower bound in  
 \eqref{eq:rop_conway} can be made arbitrary large by letting $a,b,c\to+\infty$, while the lower bound 
 in \eqref{eq:L(K)-AC-g(K)} stays constant for any values of $a$, $b$, $c$, under consideration. Yet another\footnote{out of a few such examples given in \cite{Manchon:2012}.} example of a family of pretzel knots with constant genus one and arbitrarily large $c_2$--coefficient is   $D(m,k)=P(m,\varepsilon,\stackrel{|k|-\text{times}}{\ldots},\varepsilon)$, $m>0$, $k$, where $\varepsilon=\frac{k}{|k|}$ is the sign of $k$ (e.g. $D(3,-2)=P(3,-1,-1)$). For any such $D(m,k)$, we have $c_2(D(m,k))=\frac{mk}{4}$.
\begin{rem}
{\rm 	A natural question can be raised about the the reverse situation: Can we find  a family of knots with constant $c_{2n}$-coefficient (or any finite type invariant, see Remark \ref{rem:method-extended}), but arbitrarily large genus? For instance, there exist knots with $c_2=0$ and nonzero genus (such as $8_2$), in these cases \eqref{eq:L(K)-AC-g(K)} still provides a nontrivial lower bound.} 
\end{rem}

The paper is structured as follows: Section \ref{S:knots} is devoted to a review of 
arrow polynomials for finite type invariants and Kravchenko-Polyak tree invariants in particular, it also contains the proof Theorem \ref{thm:conway}.  Section \ref{S:links_ropelength} contains information on the average overcrossing number for links and link ropelength estimates analogous to the ones obtained by Buck and Simon \cite{Buck-Simon:1999} (see Equation \eqref{eq:rop-BS-acr}). The proof of Theorem \ref{thm:main} is presented in Section \ref{S:proof-main}, together with final comments and remarks.

\section{Acknowledgements}
Both authors acknowledge the generous support of NSF DMS 1043009 and DARPA YFA N66001-11-1-4132 during the years 2011-2015. 
The first author extends his thanks to Jason Cantarella (who pointed out to his work in \cite{Cantarella:2009} used in Lemma \ref{lem:aov-bound}), Jason Parsley and Clayton Shonkwiler for stimulating  conversations about ropelength during the {\em Entanglement and linking} workshop in Pisa, 2012. 
Both authors wish to thank the anonymous referee, for a very detailed report, which improved the quality of the paper.

The results of this paper grew out of considerations in the second author's doctoral thesis \cite{Michaelides:2015}, where a weaker versions of the estimates (in the Borromean case) were obtained.

\begin{figure}[ht]
	\includegraphics[width=.6\textwidth]{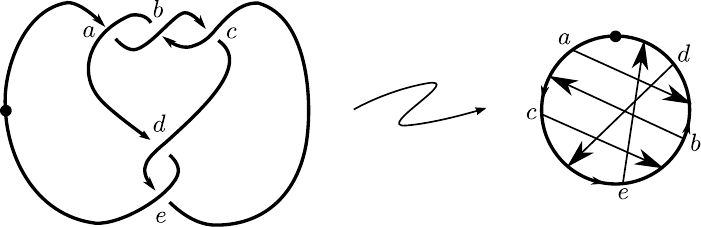}
	\caption{$5_2$ knot and its Gauss diagram (all crossings are positive).}\label{fig:5_2-gauss}
\end{figure}

\section{Arrow polynomials and finite type invariants}\label{S:knots}

Recall from \cite{Chmutov-Duzhin-Mostovoy:2012}, the {\em Gauss diagram} of a knot $K$ is a way of representing signed overcrossings in a knot diagram, by arrows based on a circle ({\em Wilson loops}, \cite{Bar-Natan:1991}) with signs encoding the sign of the crossings (see Figure \ref{fig:5_2-gauss} showing the $5_2$ knot and its Gauss diagram). More precisely, the Gauss diagram $G_K$ of a knot $K:S^1\longrightarrow \R^3$ is constructed by marking pairs of points in the domain $S^1$, endpoints of a correspoinding arrow in $G_K$,  which are mapped to crossings in a generic planar projection of $K$. The arrow always points from the under-- to the over--crossing and the orientation of the circle $S^1$ in $G_K$ agrees with the orientation of the knot.

 Given a Gauss diagram $G$, the {\em arrow polynomials} of \cite{Goussarov-Polyak-Viro:2000, Polyak-Viro:1994} are defined simply as a signed count of selected subdiagrams in $G$. For instance the second coefficient of the Conway polynomial $c_2(K)$ is given 
by the signed count of $\vvcenteredinclude{.2}{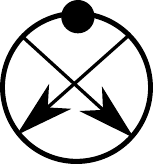}$ in $G$, denoted as 
\begin{equation}\label{eq:c_2-arrow}
 c_2(K)=\langle\vvcenteredinclude{.2}{c2-arrow.pdf}, G\rangle=\sum_{\phi:\vvcenteredinclude{.1}{c2-arrow.pdf}\longrightarrow G} \sign(\phi),\qquad \sign(\phi)=\prod_{\alpha\in \vvcenteredinclude{.1}{c2-arrow.pdf}} \sign(\phi(\alpha)),
\end{equation}
where the sum is over all basepoint preserving graph embeddings $\{\phi\}$ of $\vvcenteredinclude{.2}{c2-arrow.pdf}$ into $G$, and the sign is a product of signs of corresponding arrows in $\phi(\vvcenteredinclude{.2}{c2-arrow.pdf})\subset G$. For example in the Gauss diagram of $5_2$ knot in Figure \ref{fig:5_2-gauss}, there are two possible embeddings of $\vvcenteredinclude{.2}{c2-arrow.pdf}$ into the diagram. One matches the pair of arrows $\{a,d\}$ and another pair $\{c,d\}$, since all crossings are positive we obtain $c_2(5_2)=2$.
\begin{figure}[!ht] 
	\centering
	\includegraphics[width=0.4\textwidth]{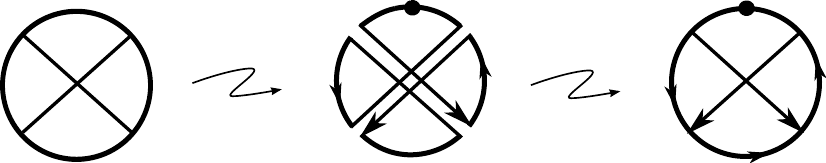}
	\caption{Turning a one-component chord diagram with a base point into an arrow diagram}\label{fig:chord_to_arrow}
\end{figure} 

For other even coefficients of the Conway polynomial, the work in \cite{Chmutov-Khoury-Rossi:2009} provides  the following recipe for their arrow polynomials. Given $n>0$, consider any chord diagram $D$, on a single circle component with $2n$ chords, such as $\vvcenteredinclude{.25}{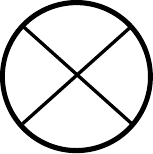}$, $\vvcenteredinclude{.25}{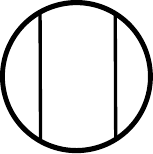}$, $\vvcenteredinclude{.25}{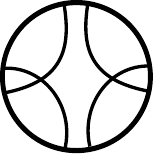}$. A chord diagram $D$ is said to be a {\em $k$-component diagram}, if after parallel doubling of each chord according to  \vcenteredinclude{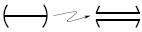}, the resulting curve will have $k$--components. For instance $\vvcenteredinclude{.25}{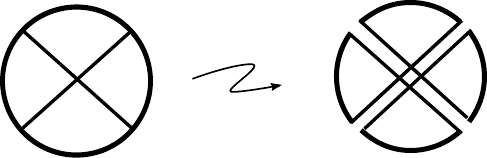}$ is a $1$--component diagram and $\vvcenteredinclude{.25}{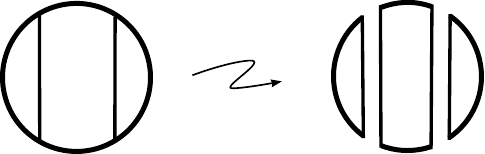}$ is a $3$--component diagram.
For the coefficients $c_{2n}$, only one component diagrams will be of interest and we turn a one-component chord diagram with a base point into an arrow diagram according to the following rule \cite{Chmutov-Khoury-Rossi:2009}:
\begin{quote}
	{\em Starting from the base point we move along the diagram with doubled chords. During this journey we pass both copies of each chord in opposite directions. Choose an arrow on each chord which corresponds to the direction of the first passage of the copies of the chord (see Figure \ref{fig:chord_to_arrow} for the illustration).}
\end{quote}
 We call, the arrow diagram obtained according to this method, the {\em  ascending arrow diagram} and denote by $C_{2n}$ the sum of all based one-component ascending arrow diagrams with $2n$ arrows. For example $\vvcenteredinclude{.3}{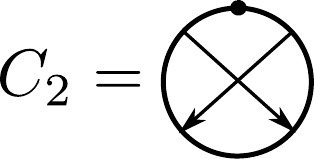}$ and $C_{4}$ is shown below (c.f. \cite[p. 777]{Chmutov-Khoury-Rossi:2009}).
\[
\vvcenteredinclude{.31}{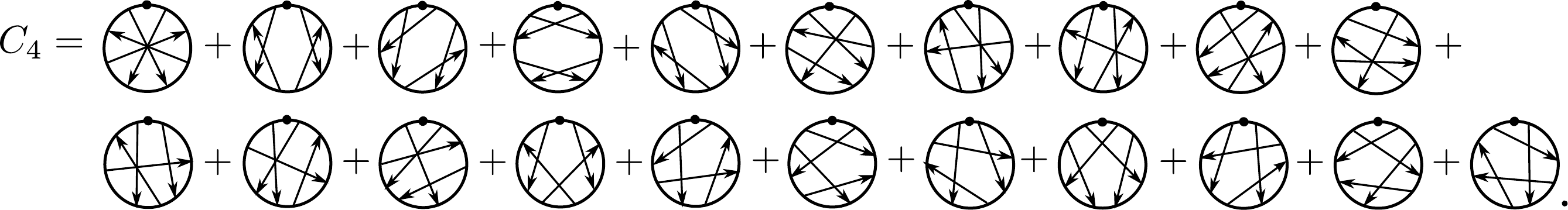}
\]
In \cite{Chmutov-Khoury-Rossi:2009}, the authors show for $n \geq 1$, that the $c_{2n}(K)$ coefficient  of the Conway polynomial of $K$ equals 
	\begin{equation}\label{eq:conway_coefficients}
	c_{2n}(K) = \langle C_{2n}, G_K \rangle.
	\end{equation}
  \begin{repthm}{thm:conway}
  	Given a knot $K$, we have the following crossing number estimate
  	\begin{equation}\label{eq2:crn_conway}
  	\Crn(K) \geq \bigl((2n)! |c_{2n}(K)|\bigl)^{\frac{1}{2n}}\geq\tfrac{2n}{3} |c_{2n}(K)|^{\frac{1}{2n}}.
  	\end{equation}
  \end{repthm}
\begin{proof}
	\no Given $K$ and its Gauss diagram $G_K$, let $X=\{1,2,\ldots, \crn(K)\}$ index arrows of $G_K$ (i.e. crossings of a diagram of $K$ used to obtain $G_K$). For diagram term $A_i$ in the sum $C_{2n}=\sum_i A_i$, an embedding $\phi:A_i\longmapsto G_K$ covers a certain $2n$ element subset of crossings in $X$ that we denote by $X_{\phi}(i)$. Let $\mathcal{E}(i;G_K)$ be the set of all possible embeddings  $\phi:A_i\longmapsto G_K$, and 
	\[
	\mathcal{E}(G_K)=\bigsqcup_i \mathcal{E}(i;G_K).
	\]
	Note that $X_\phi(i)\neq X_\xi(j)$ for $i\neq j$ and $X_\phi(i)\neq X_\xi(i)$ for $\phi\neq \xi$, thus for each $i$ we have an injective map
	\[
	F_i:\mathcal{E}(i;G_K)\longmapsto \mathcal{P}_{2n}(X),\qquad F_i(\phi)=X_\phi(i),
	\]
	where $\mathcal{P}_{2n}(X)=\{\text{$2n$--element subsets of $X$}\}$. $F_i$ extends in an obvious way to the whole disjoint union $\mathcal{E}(G_K)$, as $F:\mathcal{E}(G_K)\longrightarrow \mathcal{P}_{2n}(X)$, $F=\sqcup_i F_i$ and remains injective. In turn, for every $i$ we have 
	\[
	|\langle A_i,G_K\rangle|\leq \# \mathcal{E}(i;G_K)
	\]
	and therefore 
	\[
	  |\langle C_{2n},G_K\rangle|\leq \# \mathcal{E}(G_K)< \#\mathcal{P}_{2n}(X)={\crn(K) \choose 2n}.
	\]
	If $\crn(K)<2n$ then the left hand side vanishes. 
Since ${\crn(K) \choose 2n}\leq \frac{\crn(K)^{2n}}{(2n)!}$, we obtain
	\[
		|c_{2n}(K)|\leq  \frac{\crn(K)^{2n}}{(2n)!}\quad \Rightarrow\quad 	\bigl((2n)! |c_{2n}(K)|\bigl)^{\frac{1}{2n}}\leq \crn(K),
	\]
which gives the first part of \eqref{eq2:crn_conway}. Using the following upper lower bound for $m!$ (Stirling's approximation, \cite{Abramowitz:1964aa})
\[
m!\geq \sqrt{2\pi} m^{m+\frac 12} e^{-m},
\]
applying , $e^{-1}\geq \frac{1}{3}$, $\bigl(\sqrt{2\pi}\bigr)^{\frac{1}{m}}\geq 1$, $\bigl(m^{m+\frac 12}\bigr)^{\frac{1}{m}}\geq m  \bigl(\sqrt{m}\bigr)^{\frac{1}{m}}\geq m$, yields
\begin{equation}\label{eq:stirling2} 
\bigl(m!\bigr)^{\frac{1}{m}}\geq \bigl(\sqrt{2\pi} (m)^{m+\frac 12} e^{-m}\bigr)^{\frac{1}{m}} \geq \frac{m}{3},
\end{equation}
for $m=2n$ one obtains the second part of \eqref{eq2:crn_conway}.
\end{proof}
\begin{figure}[ht]
	\includegraphics[width=.08\textwidth]{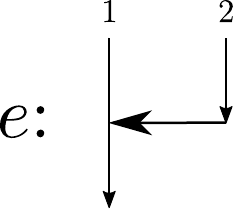}\qquad\qquad \includegraphics[width=.08\textwidth]{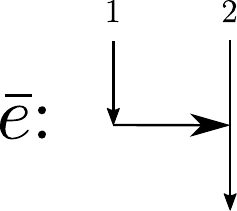}\qquad\qquad\qquad \includegraphics[width=.15\textwidth]{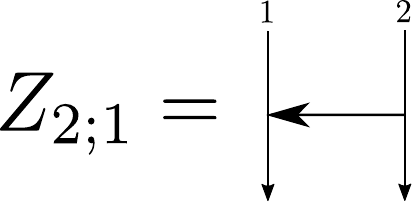}
	\caption{Elementary trees $e$ and $\bar{e}$ and the $Z_{2;1}$ arrow polynomial. }\label{fig:elem-tree} 
\end{figure}	

 Next, we turn to arrow polynomials for Milnor linking numbers. In \cite{Kravchenko-Polyak:2011}, Kravchenko and Polyak introduced tree invariants of string links and established their relation to Milnor linking numbers via the skein relation of \cite{Polyak:2000}. In the recent paper, the authors\footnote{consult \cite{Kotorii:2013} for a related result.} \cite{Komendarczyk-Michaelides:2016} showed that the arrow polynomials of Kravchenko and Polyak, applied to Gauss diagrams of closed based links, yield certain $\bar{\mu}$--invariants (as defined in \eqref{eq:bar-mu-Delta(I)}). 
For a purpose of the proof of Theorem \ref{thm:main}, it suffices to give a recursive definition, provided  in \cite{Komendarczyk-Michaelides:2016}, for the arrow polynomial of $\bar{\mu}_{23\ldots n;1}(L)$ denoted by $Z_{n;1}$. Changing the convention, adopted for knots, we follow \cite{Kravchenko-Polyak:2011}, \cite{Komendarczyk-Michaelides:2016} and use vertical segments (strings) oriented downwards in place of circles (Wilson loops) as components. 
\begin{figure}[!ht] 
	\begin{overpic}[width=.5\textwidth]{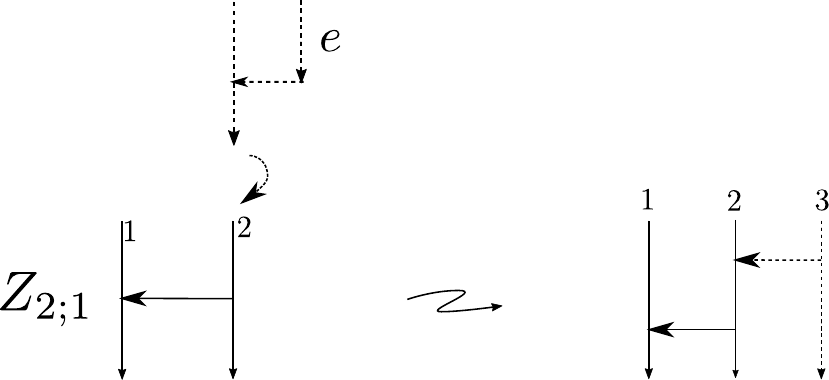}
	\end{overpic}
	\caption{Obtaining a term in $Z_{3;1}$ via stacking $e$ on the second component of $Z_{2;1}$, i.e. $Z_{2;1}\prec_2 e$. }\label{fig:Z_2;1-to-term-Z_3;1}
\end{figure}
The polynomial $Z_{n;1}$ is obtained inductively from~$Z_{n-1;1}=\sum_k \pm A_k$ by expanding each term $A_k$ of $Z_{n;1}$ through stacking elementary tree diagrams $e$ and $\bar{e}$, shown in Figure \ref{fig:elem-tree}, the sign of a resulting term is determined accordingly. The stacking operation is denoted by $\prec_i$, where $i=1,\ldots, n$ tells which component is used for stacking. Figure \ref{fig:Z_2;1-to-term-Z_3;1} shows  $Z_{2;1}\prec_2 e$. The inductive procedure is defined as follows:
\begin{itemize}
	\item[$(i)$]  $Z_{2;1}$ is shown in Figure \ref{fig:elem-tree}(right).
	\item[$(ii)$] For each term $A_k$ in $Z_{n-1;1}$ produce terms in $Z_{n;1}$ by stacking\footnote{Note that $\bar{e}$ is not allowed to be stacked on the first component.} $e$ and $\bar{e}$ on each component i.e. $A_k\prec_i e$ for $i=1,\ldots, n$ and $A_k\prec_i \bar{e}$ for $i=2,\ldots, n$, see Figure \ref{fig:Z_2;1-to-term-Z_3;1}. Eliminate isomorphic (duplicate) diagrams.
	\item[$(iii)$] The sign of each term in $Z_{n;1}$ equals to $(-1)^q$, $q=$number of arrows pointing to the right.
\end{itemize} 

As an example consider $Z_{3;1}$; we begin with the initial tree $Z_{2;1}$, and expand by stacking $e$ and $\bar{e}$ on the strings of $Z_{2;1}$, this is shown in Figure \ref{fig:Z_2;1-to-Z_3;1}, we avoid stacking $\bar{e}$ on the first component (called the {\em trunk}, \cite{Komendarczyk-Michaelides:2016}). Thus $Z_{3;1}$ is obtained as $A+B-C$, where $A=Z_{2;1}\prec_2 e$, $B=Z_{2;1}\prec_1 e$, and $C=Z_{2;1}\prec_2 \bar{e}$. 
\begin{figure}[!ht] 
	\begin{overpic}[width=.5\textwidth]{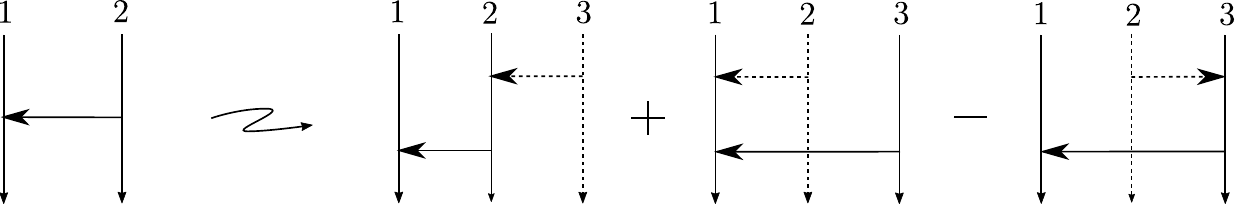}
	\end{overpic}
	\caption{$Z_{3;1}=A+B-C$ obtained from $Z_{2;1}$ via $(i)$--$(iii)$. }\label{fig:Z_2;1-to-Z_3;1}
\end{figure}	
\no Given $Z_{n;1}$, the main result of \cite{Komendarczyk-Michaelides:2016} (see also \cite{Kotorii:2013} for a related result) yields the following formula
\begin{equation}\label{eq:bar-mu-Z_n;1}
\overline{\mu}_{n;1}(L) \equiv \langle Z_{n;1}, G_L\rangle \mod\ \Delta_\mu(n;1).
\end{equation} 
where $\overline{\mu}_{n;1}(L):=\overline{\mu}_{2\ldots n;1}(L)$, $G_L$ a Gauss diagram of an $n$--component link $L$, and the indeteminacy $\Delta_\mu(n;1)$ is defined in \eqref{eq:bar-mu-Delta(I)}. 
Recall that $\langle Z_{n;1}, G_L\rangle=\sum_k \pm \langle A_k, G_L\rangle$ ($Z_{n;1}=\sum_k \pm A_k$) where  $\langle A_k, G_L\rangle=\sum_{\phi:A_k\longrightarrow G_L} \sign(\phi)$ is a signed count of subdiagrams isomorphic to $A_k$ in $G_L$.

 For $n = 2$, we obtain the usual linking number
\begin{equation}\label{eq:lk-gauss}
\bar{\mu}_{2;1}(L)=\langle Z_{2;1}, G_L\rangle = \Bigl\langle\vvcenteredinclude{0.23}{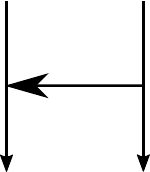}, G_L\Bigr\rangle.
\end{equation}
For $n=3$ and $n=4$ the arrow polynomials can be obtained following the stacking procedure 
\[
\begin{split}
 \bar{\mu}_{3;1}(L) & =\langle Z_{3;1}, G_L\rangle \mod \gcd\{\bar{\mu}_{2;1}(L), \bar{\mu}_{3;1}(L), \bar{\mu}_{3;2}(L)\},\\
 &\quad  Z_{3;1} = \vvcenteredinclude{0.3}{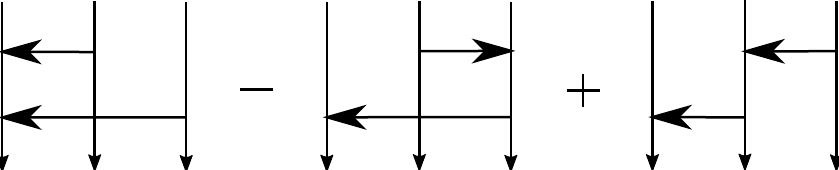},
\end{split}
\]
and
\[
\begin{split}
\bar{\mu}_{4;1}(L) & =\langle Z_{4;1}, G_L\rangle \mod \Delta_\mu(4;1),\\
 &\quad  Z_{4;1} = \vvcenteredinclude{0.3}{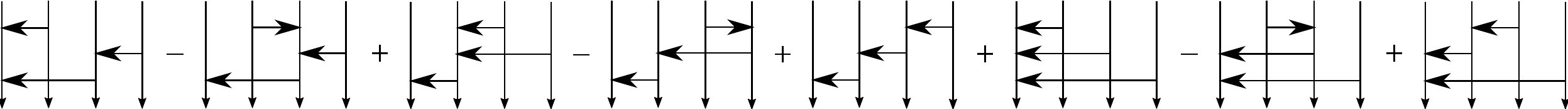} \\
&\qquad\qquad\vvcenteredinclude{0.3}{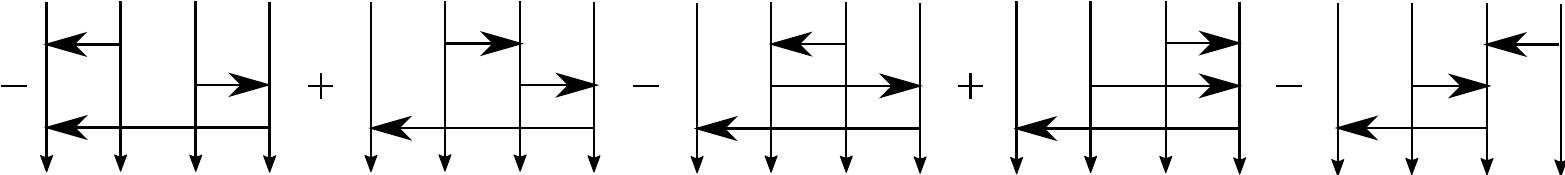}\ .
\end{split}
\]
\smallskip

Given a formula for $\bar{\mu}_{n;1}(L)=\bar{\mu}_{23\ldots n;1}(L)$ all remaining $\bar{\mu}$--invariants with distinct indices can be obtained from the following permutation identity (for $\sigma\in \Sigma(1,\ldots,n)$)
\begin{equation}\label{eq:mu-symmetry}
\bar{\mu}_{\sigma(2)\sigma(3)\ldots\sigma(n);\sigma(1)}(L)=\bar{\mu}_{23\ldots n;1}(\sigma(L)),\qquad \sigma(L)=(L_{\sigma(1)},L_{\sigma(2)},\ldots, L_{\sigma(n)}).
\end{equation}
By \eqref{eq:bar-mu-Z_n;1}, \eqref{eq:mu-symmetry} and \eqref{eq:bar-mu-Delta(I)} we have
\begin{equation}\label{eq:bar-mu-sigma-Z}
 \bar{\mu}_{\sigma(2)\sigma(3)\ldots\sigma(n);\sigma(1)}(L)= \langle \sigma(Z_{n;1}), G_L\rangle \mod\ \Delta_\mu(\sigma(2)\sigma(3)\ldots\sigma(n);\sigma(1)),
\end{equation}
where $\sigma(Z_{n;1})$ is the arrow polynomial obtained from $Z_{n;1}$ by permuting the strings according to $\sigma$.
\begin{rem}\label{rem:mu-cyclic}
	{\em
	 One of the properties of $\bar{\mu}$--invariants is their cyclic symmetry,  \cite[Equation (21)]{Milnor:1957}, i.e. given a cyclic permutation $\rho$, we have
	 \[
	  \bar{\mu}_{\rho(2)\rho(3)\ldots\rho(n);\rho(1)}(L)=\bar{\mu}_{23\ldots n;1}(L).
	 \]
	}
\end{rem}
\section{Overcrossing number of links}\label{S:links_ropelength}

We will denote by $D_L$ a regular diagram of a link $L$, and by $D_L(v)$, the diagram obtained by the projection of $L$ onto the plane normal to a vector\footnote{unless otherwise stated we assume that $v$ is generic and thus $D_L(v)$ to be a regular diagram.} $v\in S^2$. For a pair of components $L_i$ and $L_j$ in $L$, define the {\em overcrossing number}  in the diagram and the {\em pairwise crossing number} of components $L_i$ and $L_j$ in $D_L$ i.e.
\begin{equation}\label{eq:ov-cr}
\begin{split}
\mathsf{ov}_{i,j}(D_L) & =\{\text{number of times $L_i$ overpasses $L_j$ in $D_L$}\}.\\
\mathsf{cr}_{i,j}(D_L) & =\{\text{number of times $L_i$ overpasses and underpasses $L_j$ in $D_L$}\}\\
& =\mathsf{ov}_{i,j}(D_L)+\mathsf{ov}_{j,i}(D_L)=\mathsf{cr}_{j,i}(D_L).
\end{split}
\end{equation}
 In the following, we also use the {\em average overcrossing number} and {\em  average pairwise crossing number} of components $L_i$ and $L_j$ in $L$, defined as an average over all $D_L(v)$, $v\in S^2$, i.e.
\begin{equation}\label{eq:aov-acr}
\mathsf{aov}_{i,j}(L)=\frac{1}{4\pi}\int_{S^2}  \mathsf{ov}_{i,j}(v)\, dv,\qquad \mathsf{acr}_{i,j}(L)=\frac{1}{4\pi}\int_{S^2}  \mathsf{cr}_{i,j}(v)\, dv=2\, \mathsf{aov}_{i,j}(L)
\end{equation}
In the following result is based on the work in \cite{Cantarella:2009} and \cite{Buck-Simon:1999}, the idea of using the rearrangement inequality comes from \cite{Cantarella:2009}. 
\begin{lem}\label{lem:aov-bound}
	Given a unit thickness link $L$, and any $2$--component sublink $(L_i,L_j)$:
	\begin{equation}\label{eq:aov-bound}
	\min\bigl(\ell_i\ell^{\frac 13}_j,\ell_j\ell^{\frac 13}_i\bigr)\geq 3\,\mathsf{aov}_{i,j}(L),\qquad \ell_i\ell_j\geq 16\pi\,\mathsf{aov}_{i,j}(L),
	\end{equation}
	for $\ell_i=\Ln(L_i)$, $\ell_j=\Ln(L_j)$, the
	length of $L_i$ and $L_j$ respectively.
\end{lem}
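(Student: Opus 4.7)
The plan is to adapt the Buck--Simon style ropelength/average-crossing argument to the two-component setting, where the key extra ingredient is that unit thickness forces the centerlines of two distinct components to stay at distance at least $2$. I would begin by writing $\mathsf{aov}_{i,j}(L)$ as a Gauss-type double integral. Viewing the map $\Phi: L_i\times L_j\to S^2$, $\Phi(s,t)=r(s,t)/|r(s,t)|$ with $r(s,t)=L_j(t)-L_i(s)$ as a change of variables whose Jacobian is $|\det(T_i,T_j,r)|/|r|^3$, and noting that for generic $v$ the fiber $\Phi^{-1}(v)$ corresponds exactly to ordered pairs $(x,y)$ with $y$ lying above $x$ in direction $v$, one obtains
\[
\mathsf{aov}_{i,j}(L)=\frac{1}{4\pi}\int_{L_i}\!\int_{L_j}\frac{|\det(T_i(s),T_j(t),r(s,t))|}{|r(s,t)|^3}\,ds\,dt\leq \frac{1}{4\pi}\int_{L_i}\!\int_{L_j}\frac{ds\,dt}{|r(s,t)|^2}.
\]

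Next I would exploit unit thickness. Because the normal tubes of radius $1$ around $L_i$ and $L_j$ are embedded and disjoint, any point strictly interior to the segment from $L_i(s)$ to $L_j(t)$ would otherwise have two nearest points on $L$, one on each component; hence $|r(s,t)|\geq 2$. Plugging this into the integral bound already yields the second estimate in \eqref{eq:aov-bound}:
\[
\mathsf{aov}_{i,j}(L)\leq \frac{1}{4\pi}\cdot\frac{\ell_i\ell_j}{4}=\frac{\ell_i\ell_j}{16\pi}.
\]

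For the sharper bound $\min(\ell_i\ell_j^{1/3},\ell_j\ell_i^{1/3})\geq \frac{5}{3}\mathsf{aov}_{i,j}(L)$, I would fix $x\in L_i$ and estimate $J(x):=\int_{L_j}|x-y|^{-2}\,ds_y$ via a shell decomposition. Setting $\phi(t)=\mathcal{H}^1(L_j\cap B(x,t))$, integration by parts gives
\[
J(x)=\int_{2}^{\infty}\frac{d\phi(t)}{t^2}=2\int_2^{\infty}\frac{\phi(t)}{t^3}\,dt,
\]
using $\phi(t)\equiv 0$ for $t\leq 2$. Comparing the volume $\pi\phi(t)$ of the embedded unit tube around $L_j\cap B(x,t)$ to $B(x,t+1)$ yields $\phi(t)\leq \frac{4}{3}((t+1)^3-1)$, and I also have $\phi(t)\leq \ell_j$. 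Splitting the integral at the crossover radius $R$ where these two bounds agree and optimizing produces $J(x)\leq C\,\ell_j^{1/3}$ for an explicit constant $C$. Integrating over $L_i$ gives $\mathsf{aov}_{i,j}(L)\leq \frac{C}{4\pi}\ell_i\ell_j^{1/3}$, and the companion bound $\ell_j\ell_i^{1/3}\geq \frac{5}{3}\mathsf{aov}_{i,j}(L)$ follows by the symmetric argument together with the identity $\mathsf{aov}_{i,j}=\mathsf{aov}_{j,i}$ (obtained from $v\mapsto -v$, which swaps overcrossings of $L_i$ over $L_j$ with those of $L_j$ over $L_i$).

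The main technical obstacle is pinning down the precise constant $\frac{5}{3}$. The Gauss-integral step and the length/tube bounds are straightforward, but arriving at this specific coefficient requires a careful choice of crossover radius $R$ in the shell-split and an explicit evaluation (and perhaps slight sharpening) of $\int_{2}^{R}(t+1)^3 t^{-3}\,dt$; this is where a bit of calculus bookkeeping is unavoidable, and it is the only step I would expect to be genuinely fiddly as opposed to conceptually novel.
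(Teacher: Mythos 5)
Your proposal follows essentially the same route as the paper: the Gauss-map/coarea identity with the pointwise bound $\mathsf{aov}_{i,j}(L)\le\frac{1}{4\pi}\iint |r|^{-2}\,ds\,dt$, the distance bound $|r|\ge 2$ from unit thickness for the second inequality, and a shell-decomposition/volume-comparison estimate of the illumination integral for the first (the paper phrases that last step via monotone rearrangement rather than your integration by parts, but the two are equivalent). The bookkeeping you flagged does work out: with $\phi(t)\le\min\bigl(\ell_j,\tfrac43((t+1)^3-1)\bigr)$ and the split at the crossover radius, one gets $J(x)\le C\,\ell_j^{1/3}$ with $C$ comfortably below $12\pi/5\approx 7.54$, which is exactly what is needed for the coefficient $\tfrac53$.
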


\begin{proof}
	Consider the Gauss map of $L_i=L_i(s)$ and $L_j=L_j(t)$:
	\begin{equation*}
	F_{i,j}:S^1\times S^1\longmapsto \Cnf_2(\R^3)\longmapsto S^2,\quad F_{i,j}(s,t)=\frac{L_i(s)-L_j(t)}{||L_i(s)-L_j(t)||}.
	\end{equation*}
	
	\no If $v\in S^2$ is a regular value of $F_{i,j}$ (which happens for the set of full measure on $S^2$) then
	\begin{equation*}
	\mathsf{ov}_{i,j}(v)=\#\{\text{points in $F^{-1}_{i,j}(v)$} \}.
	\end{equation*}
	i.e. $\mathsf{ov}_{i,j}(v)$ stands for number of times the $i$--component of $L$ passes over the $j$--component, in the projection of $L$ onto the plane in $\R^3$ normal to $v$. As a direct consequence of Federer's coarea formula \cite{Federer:1969} (see e.g. \cite{Michaelides:2015} for a proof)
	\begin{align} \label{eq:gauss-linking-int}
	\int_{L_i\times L_j} |F^\ast_{i,j} \omega| & =\frac{1}{4\pi}\int_{S^1\times S^1} \frac{|\langle  L_i(s)-L_j(t),L'_i(s), L'_j(t) \rangle|}{||L_i(s)-L_j(t)||^3}  ds \, dt\\
	& =\frac{1}{4\pi}\int_{S^2}  \mathsf{ov}_{i,j}(v)\, dv, \label{eq:avg-overcrossing}
	\end{align}
	where $\omega = \frac{1}{4\pi}\bigl(x \,dy \wedge dz - y \, dx \wedge dz + z\,  dx \wedge dy\bigr)$ is the normalized area form on the unit sphere in $\R^3$ and 
	\[
	 \langle  v, w, z \rangle:=\det\bigl(v,w,z\bigr),\qquad\text{for}\quad v,w,z\in \R^3.
	\] 
	Assuming the arc--length parametrization by $s \in [0, \ell_i]$ and $t \in [0, \ell_j]$ of the components we have $\|L'_i (s)\|=\|L'_j(t)
	\|=1$ and therefore: 
	\begin{equation}\label{eq:quarter}
	\Bigl|\frac{\langle L_i(s)-L_j(t),  L'_i(s),L'_j(t) \rangle}{||L_i(s)-L_j(t)||^3}\Bigl|\leq \frac{1}{||L_i(s)-L_j(t)||^2}	
	\end{equation}
	\no Combining Equations \eqref{eq:avg-overcrossing} and \eqref{eq:quarter} yields
	\begin{equation}\label{eq:overcrossing}
	\int^{\ell_j}_0\int^{\ell_i}_0 \frac{1}{||L_i(s) - L_j(t)||^2}ds\,dt=\int^{\ell_j}_0 I_i(L_j(t))dt,
	\end{equation}
	where 
	\[
	 I_i(p)=\int^{\ell_i}_0 \frac{1}{||L_i(s) - p||^2}ds=\int^{\ell_i}_0 \frac{1}{r(s)^2}ds,\quad r(s)=\|L_i(s) - p\|,
	\]
	is often called {\em illumination} of $L_i$ from the point $p\in \R^3$, c.f. \cite{Buck-Simon:1999}. Following the approach of \cite{Buck-Simon:1999}, and \cite{Cantarella:2009} we estimate $I_i(t)=I_i(p)$ for $p=L_j(t)$. Denote  by $B_a(p)$ the ball at $p=L_j(t)$ of radius $a$, and $s(z)$ the length of a portion of $L_i$ within the spherical shell: $Sh(z)=B_z(p)\setminus B_1(p)$, $z\geq 1$. Note that, because the distance between $L_i$ and $L_j$ is at least $2$, the unit thickness tube about $L_i$ is contained entirely in $Sh(z)$ for big enough $z$. Clearly, $s(z)$ is nondecreasing. Since the volume of a unit thickness tube of length $a$ is $\pi a$, comparing the volumes we obtain
	\begin{equation}\label{eq:vol-tube-shell}
	\begin{split}
	\pi s(z) & \leq \operatorname{Vol}(Sh(z))=\tfrac{4}{3}\pi\bigl(z^3-1^3\bigr), \ \text{and}\\
	s(z) & \leq \tfrac{4}{3}\, z^3, \qquad \text{for}\quad z\geq 1.
	\end{split}
	\end{equation}
	Next, using the monotone rearrangement $\bigl(\frac{1}{r^2}\bigr)^\ast$ of $\frac{1}{r^2}$,  (Remark \ref{rem:rearrangement}):
	 \begin{equation}\label{eq:rng-ineq}
	 \Bigl(\frac{1}{r^2}\Bigr)^\ast(s)\leq (\tfrac{4}{3})^{\frac 23}\,s^{-\frac 23}, 
	 \end{equation}
	 and the monotone rearrangement inequality \cite{Lieb-Loss:2001}: 
	\begin{equation}\label{eq:I_i-monotone}
	I_i(p)= \int^{\ell_i}_0 \frac{1}{r^2(s)}ds \leq \int^{\ell_i}_0 \Bigl(\frac{1}{r^2}\Bigr)^\ast(s)ds \leq \int^{\ell_i}_0 (\tfrac{4}{3})^{\frac 23}\,s^{-\frac 23} ds= 3(\tfrac{4}{3})^{\frac 23}\,\ell_i^{\frac 13}.
	\end{equation}
	\no Integrating \eqref{eq:I_i-monotone} with respect to the $t$--parameter, we obtain 
	\[ 
	\mathsf{aov}(L)\leq \frac{1}{4\pi}\int^{\ell_j}_0\int^{\ell_i}_0 \frac{1}{||L_i(s) - L_j(t)||^2}ds\,dt\leq 3(\tfrac{4}{3})^{\frac 23}\tfrac{1}{4\pi}\, \ell_j\,\ell_i^{\frac 13}< \tfrac{1}{3} \ell_j\,\ell_i^{\frac 13}.
	\]
	Since the argument works for any choice of $i$ and $j$ estimates in Equation \eqref{eq:aov-bound} are proven. The second estimate in \eqref{eq:aov-bound} follows immediately from the fact that $\frac{1}{\|L_i(s)-L_j(t)\|^2}\leq \frac 14$.
\end{proof}
	\begin{rem}\label{rem:rearrangement}
	 {\rm	Recall, that for a nonnegative real valued function $f$, (on $\R^n$) vanishing at infinity, the {\em rearrangement, $f^\ast$ of $f$} is given  by
	 	\[
	 	 f^\ast(x)=\int^\infty_0 \chi^\ast_{\{f > u\}}(x)\, du,
	 	\]	 
	where $\chi^\ast_{\{f > u\}}(x)=\chi_{B_\rho}(x)$ is the characteristic function of the ball $B_\rho$ centered at the origin, determined by the volume condition: $\operatorname{Vol}(B_\rho)=\operatorname{Vol}(\{x\ |\ f(x)>u\})$, see \cite[p. 80]{Lieb-Loss:2001} for further properties of the rearrangements. In particular, the rearrangement inequality states \cite[p. 82]{Lieb-Loss:2001}: 
$\int_{\R^n} f(x)\,dx\leq \int_{\R^n} f^\ast(x)\,dx$. For one variable functions, we may use the interval $[0,\rho]$ in place of the ball $B_\rho$, then $f^\ast$ is a decreasing function on $[0,+\infty)$. Specifically, for $f(s)=\frac{1}{r^2(s)}=\frac{1}{\|L_i(s)-p\|^2}$, we have
\[
 \Bigl(\frac{1}{r^2}\Bigr)^\ast(s)=u,\qquad\text{for}\quad \operatorname{length}\bigl(\{x\ |\ u<\frac{1}{r^2(x)}\leq 1\}\bigr)=s,
\]  
where $\operatorname{length}\bigl(\{x\ |\ u<\frac{1}{r^2(x)}\leq 1\}\bigr)$ stands for the length of the portion of $L_i$ satisfying the given condition. Further, by the definition of $s(z)$, from the previous paragraph, and \eqref{eq:vol-tube-shell}, we obtain 
\[
s=\operatorname{length}\bigl(\{x\ |\ \frac{1}{r^2(x)}>u\}\bigr)=\operatorname{length}\bigl(\{x\ |\ 1\leq r(x)<\frac{1}{\sqrt{u}}\}\bigr)=s\bigl(\frac{1}{\sqrt{u}}\bigr)\leq \tfrac{4}{3}\, \bigl(\frac{1}{\sqrt{u}}\bigr)^3,
\]
and \eqref{eq:rng-ineq} as a result.
	}
	\end{rem}

\no From the Gauss linking integral \eqref{eq:gauss-linking-int}:
\[
 |\lk(L_i,L_j)|\leq \aov_{i,j}(L),
\]
 thus we immediately recover the result of \cite{Diao-Ernst-Janse-Van-Rensburg:2002} (but with a specific constant):
\begin{equation}\label{eq:aov-lk-bound}
3 |\lk(L_i,L_j)|\leq \min\bigl(\ell_i\ell^{\frac 13}_j,\ell_j\ell^{\frac 13}_i\bigr),\qquad 16\pi|\lk(L_i,L_j)|\leq \ell_i\ell_j.
\end{equation}
Summing up over all possible pairs: $i$, $j$ and using the symmetry of the linking number we have
\[
6\sum_{i< j} |\lk(L_i,L_j)|=3\sum_{i\neq j} |\lk(L_i,L_j)|\leq \sum_{i\neq j} \ell_i\ell^{\frac 13}_j= (\sum_i \ell_i)(\sum_j \ell^{\frac 13}_j)-\sum_i \ell^{\frac 43}_i.
\]
From Jensen's Inequality \cite{Lieb-Loss:2001}, we know that $\frac{1}{n}(\sum_i \ell^{\frac 13}_i)\leq \bigl(\frac{1}{n}\sum_i \ell_i\bigr)^{\frac 13}$ and $\frac{1}{n}(\sum_i \ell^{\frac 43}_i)\geq \bigl(\frac{1}{n}\sum_i \ell_i\bigr)^{\frac 43}$, therefore 
\begin{equation}\label{eq:lk-jensen}
(\sum_i \ell_i)(\sum_j \ell^{\frac 13}_j)-\sum_i \ell^{\frac 43}_i\leq n^{\frac 23} \rop(L)\rop(L)^{\frac 13}-n^{-\frac 13} \rop(L)^{\frac 43}=\frac{n-1}{n^\frac 13}\rop(L)^{\frac 43}.
\end{equation}
Analogously, using the second estimate in \eqref{eq:aov-lk-bound} and Jensen's Inequality, yields
\[
32\pi \sum_{i< j} |\lk(L_i,L_j)|=16\pi \sum_{i\neq j} |\lk(L_i,L_j)|\leq \sum_{i\neq j} \ell_i\ell_j\leq (1-\frac{1}{n^2})\bigl(\sum_i \ell_i\bigr)^2.
\]
\begin{cor}\label{cor:rop-lk}
 Let $L$ be an $n$-component link ($n\geq 2$), then
\begin{equation}\label{eq:lk-rop-3/4-2}
 \rop(L)^{\frac 43}\geq  \frac{6 n^{\frac{1}{3}}}{(n-1)}\sum_{i<j} |\lk(L_i,L_j)|,\qquad \rop(L)^{2}\geq \frac{32\pi n^2 }{n^2-1}\sum_{i<j} |\lk(L_i,L_j)|.
\end{equation}
\end{cor}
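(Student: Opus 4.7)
The plan is to combine the Gauss linking integral bound $|\lk(L_i,L_j)|\leq \aov_{i,j}(L)$ with Lemma \ref{lem:aov-bound} to extract pairwise length inequalities for the linking numbers, sum these over all pairs of components, and then apply Jensen's inequality to reduce the resulting per-component sums to a single quantity in $\rop(L)=\sum_i \ell_i$.

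Concretely, from $|\lk(L_i,L_j)|\leq \aov_{i,j}(L)$ and Lemma \ref{lem:aov-bound} one immediately extracts the pair inequalities \eqref{eq:aov-lk-bound}. Summing each of them over ordered pairs $i\neq j$ and invoking the symmetry $\lk(L_i,L_j)=\lk(L_j,L_i)$ would produce
\[
\frac{10}{3}\sum_{i<j}|\lk(L_i,L_j)| \;\leq\; \sum_{i\neq j}\ell_i\ell_j^{1/3}, \qquad 32\pi\sum_{i<j}|\lk(L_i,L_j)| \;\leq\; \sum_{i\neq j}\ell_i\ell_j.
\]
I would then rewrite the right-hand sides as $(\sum_i\ell_i)(\sum_j\ell_j^{1/3})-\sum_i\ell_i^{4/3}$ and $(\sum_i\ell_i)^2-\sum_i\ell_i^2$. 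Applying Jensen's inequality in the appropriate direction (concavity of $x^{1/3}$ upper-bounds $\sum_j\ell_j^{1/3}$ by $n^{2/3}\rop(L)^{1/3}$, while convexity of $x^{4/3}$ and $x^{2}$ lower-bounds $\sum_i\ell_i^{4/3}$ by $n^{-1/3}\rop(L)^{4/3}$ and $\sum_i\ell_i^{2}$ suitably, each entering with a favourable sign) the right-hand sides collapse to $\tfrac{n-1}{n^{1/3}}\rop(L)^{4/3}$ and $\tfrac{n^2-1}{n^2}\rop(L)^2$ respectively, just as in the display \eqref{eq:lk-jensen}.

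Finally, I would solve each resulting inequality for $\rop(L)$ by raising to the $3/4$ and $1/2$ powers respectively. For the first bound this gives $\rop(L)\geq \bigl(\tfrac{10\,n^{1/3}}{3(n-1)}\bigr)^{3/4}\bigl(\sum_{i<j}|\lk(L_i,L_j)|\bigr)^{3/4}$, and the cleanup to the stated form uses only the crude estimates $(10/3)^{3/4}\geq 3^{3/4}$ and $n^{1/4}\geq (n-1)^{1/4}$ to rewrite the prefactor as $3^{3/4}/(n-1)^{1/2}$; for the second bound the rearrangement is immediate. The argument is essentially a bookkeeping exercise resting on the substantive analytic content of Lemma \ref{lem:aov-bound}, and I do not expect a conceptual obstacle: the only care needed is to verify that Jensen's inequality is applied in the correct direction at each stage so the signs in the telescoping identities $\sum_{i\neq j}a_ib_j=(\sum_i a_i)(\sum_j b_j)-\sum_i a_ib_i$ work in our favour.
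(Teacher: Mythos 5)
Your proposal is correct and follows essentially the same route as the paper: the Gauss-integral bound $|\lk(L_i,L_j)|\leq \aov_{i,j}(L)$ combined with Lemma \ref{lem:aov-bound} to get \eqref{eq:aov-lk-bound}, summation over pairs with the telescoping identity, Jensen's inequality exactly as in \eqref{eq:lk-jensen}, and then solving for $\rop(L)$. The only cosmetic difference is your explicit cleanup of the constant in the first bound (via $(10/3)^{3/4}\geq 3^{3/4}$ and $n^{1/4}\geq(n-1)^{1/4}$), which the paper leaves implicit, and your Jensen step for the quadratic case actually yields the slightly stronger factor $\tfrac{n-1}{n}$, which of course still implies the stated $\tfrac{n^2-1}{n^2}$ form.
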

In terms of growth of the pairwise linking numbers $|\lk(L_i,L_j)|$, for a fixed $n$, the above estimate performs better than the one in \eqref{eq:len(L_i)-lk}. One may also replace $\sum_{i<j} |\lk(L_i,L_j)|$  with the isotopy invariant 
\begin{equation}\label{eq:PCr(L)}
\PCrn(L)=\min_{D_L} \Bigl(\sum_{i\neq j} \crn_{i,j}(D_L)\Bigr),
\end{equation}
(satisfying 
$\PCrn(L)\leq \Crn(L)$), we call the {\em pairwise crossing number} of $L$. This conclusion can be considered as an analog of the Buck and Simon estimate \eqref{eq:L(K)-BS} for knots.
\begin{cor}\label{cor:rop-PCr}
		Let $L$ be an $n$-component link ($n\geq 2$), and $\PCrn(L)$ its pairwise crossing number, then
	\begin{equation}\label{eq2:PCr-rop-3/4-2}
 \rop(L)^{\frac{4}{3}}\geq   \frac{3 n^{\frac{1}{3}}}{(n-1)}\PCrn(L),\qquad  \rop(L)^2\geq \frac{16\pi n^2}{n^2-1}\PCrn(L).
	\end{equation}
\end{cor}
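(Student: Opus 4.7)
The plan is to imitate the proof of Corollary~\ref{cor:rop-lk} essentially verbatim, replacing the pointwise linking-number inequality $|\lk(L_i,L_j)|\leq \aov_{i,j}(L)$ used there with an analogous bound linking $\PCrn(L)$ to the $\aov_{i,j}(L)$'s. The key observation is that, from the definition \eqref{eq:PCr(L)}, for every generic $v\in S^2$ the regular diagram $D_L(v)$ satisfies $\PCrn(L)\leq \sum_{i<j}\crn_{i,j}(D_L(v))$, and averaging this over $v\in S^2$ together with the identity $\acr_{i,j}(L)=2\aov_{i,j}(L)$ produces the replacement bound
\[
\PCrn(L)\ \leq\ \sum_{i<j}\acr_{i,j}(L)\ =\ \sum_{i\neq j}\aov_{i,j}(L).
\]

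Once this is established, the rest is identical to the derivation of Corollary~\ref{cor:rop-lk}. For the first inequality in \eqref{eq2:PCr-rop-3/4-2}, the plan is to apply the first bound of Lemma~\ref{lem:aov-bound} to each ordered pair and sum, giving
\[
\tfrac{5}{3}\PCrn(L)\ \leq\ \tfrac{5}{3}\sum_{i\neq j}\aov_{i,j}(L)\ \leq\ \sum_{i\neq j}\ell_i\ell_j^{1/3}\ \leq\ \frac{n-1}{n^{1/3}}\,\rop(L)^{4/3},
\]
where the last step is precisely the Jensen calculation \eqref{eq:lk-jensen}. Rearranging, then weakening via $n^{1/3}\geq (n-1)^{1/3}$ together with the numerical rounding $5/3\geq 3/2$, yields the claimed form. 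For the second inequality, the analogous computation using the second bound of Lemma~\ref{lem:aov-bound}, $16\pi\,\aov_{i,j}(L)\leq \ell_i\ell_j$, combined with the estimate $\sum_{i\neq j}\ell_i\ell_j\leq (1-1/n^2)\rop(L)^2$ used in Corollary~\ref{cor:rop-lk}, produces $16\pi\,\PCrn(L)\leq \frac{n^2-1}{n^2}\rop(L)^2$, and rearrangement gives the second estimate.

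The argument presents no new conceptual difficulty and is essentially a direct transcription of the proof of Corollary~\ref{cor:rop-lk}. The main point requiring care will be the bookkeeping of the factors of $2$ coming from $\acr_{i,j}=2\aov_{i,j}$ and from conversion between $\sum_{i<j}$ and $\sum_{i\neq j}$; these together determine the precise numerical constants $3/2$ and $\sqrt{16\pi}$ appearing in \eqref{eq2:PCr-rop-3/4-2}.
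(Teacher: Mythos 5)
Your proposal is correct and is exactly the (unwritten) derivation the paper intends: Corollary \ref{cor:rop-PCr} is obtained from the proof of Corollary \ref{cor:rop-lk} by replacing the pointwise bound $|\lk(L_i,L_j)|\leq \aov_{i,j}(L)$ with the averaged bound $\PCrn(L)\leq \sum_{i<j}\acr_{i,j}(L)=\sum_{i\neq j}\aov_{i,j}(L)$ and then running the same Lemma \ref{lem:aov-bound} plus Jensen computation, and your bookkeeping (the rounding $5/3\geq 3/2$, the factor $16\pi$) reproduces the stated constants. The only caveat is notational: read literally, \eqref{eq:PCr(L)} sums over ordered pairs $i\neq j$, so $\sum_{i\neq j}\crn_{i,j}(D_L)=2\sum_{i<j}\crn_{i,j}(D_L)$ and your starting inequality $\PCrn(L)\leq\sum_{i<j}\crn_{i,j}(D_L(v))$ would lose a factor of $2$; however the constants in \eqref{eq2:PCr-rop-3/4-2} and the footnote $\PCrn(L)\leq\Crn(L)$ show the paper intends each inter-component crossing to be counted once, which is precisely the reading you adopt.
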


\section{Proof of Theorem \ref{thm:main}}\label{S:proof-main}

\no The following auxiliary lemma will be useful.
\begin{lem}\label{lem:a_i-ineq}
	Given nonnegative numbers: $a_1$, \ldots, $a_{N}$ we have for $k\geq 2$:
	\begin{equation}\label{eq:prod-est}
	\sum_{1\leq i_1<i_2<\ldots<i_{k}\leq N} a_{i_1}a_{i_2}\ldots a_{i_{k}}\leq \frac{1}{N^{k}} {N \choose k}\Bigl(\sum^{N}_{i=1} a_i\Bigr)^{k}.
	\end{equation}
\end{lem}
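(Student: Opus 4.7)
The plan is to recognize the left-hand side as the $k$-th elementary symmetric polynomial $e_k(a_1,\ldots,a_N)=\sum_{i_1<\cdots<i_k} a_{i_1}\cdots a_{i_k}$ and to interpret the right-hand side as the value of $e_k$ when all the $a_i$ are equal to their common mean $s/N$, where $s=\sum_i a_i$. Indeed, if $a_1=\cdots=a_N=s/N$ then
\[
e_k(a)=\binom{N}{k}\Bigl(\frac{s}{N}\Bigr)^{k}=\frac{1}{N^{k}}\binom{N}{k}\Bigl(\sum_{i}a_i\Bigr)^{k},
\]
so the inequality says that, for fixed sum $s$, the maximum of $e_k$ over the nonnegative orthant is attained at the fully symmetric point.

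The approach I would take is a smoothing (balancing) argument. One may assume $N\geq k$ (otherwise both sides vanish) and $s>0$ (otherwise all $a_i$ vanish). Fix $s$ and consider $e_k$ on the compact simplex $\{a_i\geq 0,\ \sum a_i=s\}$, where it attains its maximum. If two coordinates $a_i\neq a_j$ at a maximizer, separate out their contribution by writing
\[
e_k(a)=A+B\,(a_i+a_j)+C\,a_ia_j,
\]
where $A$, $B$, $C$ are the degree-$k$, $(k-1)$, and $(k-2)$ elementary symmetric polynomials in the remaining $N-2$ variables, all nonnegative. Keeping $a_i+a_j$ fixed, the product $a_ia_j$ is strictly maximized when $a_i=a_j$; hence replacing both by their average strictly increases $e_k$, contradicting maximality. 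Therefore the maximum is attained at $a_1=\cdots=a_N=s/N$, which yields the claimed bound.

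Alternatively — and this is perhaps the most compact route — the inequality is a direct consequence of Maclaurin's inequality: setting $p_k=e_k/\binom{N}{k}$, one has $p_1\geq p_2^{1/2}\geq\cdots\geq p_k^{1/k}$, and in particular $(s/N)^k=p_1^k\geq p_k$, which is exactly \eqref{eq:prod-est}. I would probably present the smoothing proof for self-containedness, and mention Maclaurin as the conceptual source.

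The only slightly delicate point will be writing the smoothing step cleanly: one must argue that the splitting $e_k=A+B(a_i+a_j)+Ca_ia_j$ has nonnegative coefficients (immediate, since $A$, $B$, $C$ are elementary symmetric polynomials in the nonnegative variables $\{a_\ell:\ell\neq i,j\}$), and that iterating the balancing step converges to the fully symmetric point — which follows at once from compactness of the simplex and continuity of $e_k$, so no technical obstacle remains.
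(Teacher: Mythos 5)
Your proposal is correct and takes essentially the same route as the paper: the paper's entire proof is the one-line observation that, for fixed $\sum_i a_i$, the quantity on the left (equivalently the ratio of the two sides) is maximized when $a_1=\cdots=a_N$, which is exactly the claim you establish. Your write-up simply supplies the justification the paper leaves implicit, and the Maclaurin inequality route in particular settles it immediately (the smoothing step's ``strictly increases'' needs the small extra remark that the coefficient $C$ cannot vanish at a maximizer with positive value, but this is easily patched and the argument is sound).
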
 
\begin{proof}
	It suffices to observe that for $a_i\geq 0$  the ratio $\Bigl(\sum_{1\leq i_1<i_2<\ldots<i_{k}\leq N} a_{i_1}a_{i_2}\ldots a_{i_{k}}\Bigr)/\Bigl(\sum^{N}_{i=1} a_i\Bigr)^{k}$ achieves its maximum for $a_1=a_2=\ldots=a_N$. 
\end{proof}
\no Recall from \eqref{eq:[mu]} that $\bar{\mu}_{n;1}:=\bar{\mu}_{23\ldots n;1}$, and
\begin{equation}\label{eq2:[mu]}
\Big[\bar{\mu}_{n;1}(L)\Big]:=\begin{cases}
\min\Bigl(\bar{\mu}_{n;1}(L),d-\bar{\mu}_{n;1}(L)\Bigr) &\ \text{for $d> 0$},\\
|\bar{\mu}_{n;1}(L)| & \ \text{for $d= 0$}
\end{cases}\qquad d=\Delta_\mu(n;1).
\end{equation}
For convenience, recall the statement of Theorem \ref{thm:main}.
 \begin{repthm}{thm:main}
	Let $L$ be an $n$-component link of unit thickness, and $\bar{\mu}(L)$ one of its top Milnor linking numbers, then
	\begin{equation}\label{eq2:mthm-rop-cr}
	\Ln(L)\geq \sqrt[4]{n}  \Bigl(\sqrt[n-1]{\Big[\bar{\mu}(L)\Big]}\Bigr)^{\frac 34},\qquad  \Crn(L) \geq \frac{1}{3}(n-1) \sqrt[n-1]{\Big[\bar{\mu}(L)\Big]}.
	\end{equation}
\end{repthm}
\begin{proof}
Let $G_L$ be a Gauss diagram of $L$ obtained from a regular link diagram $D_L$.
Consider, any term $A$ of the arrow polynomial: $Z_{n;1}$ and index the arrows of $A$ by $(i_k,j_k)$, $k=1,\ldots,n-1$ in such a way that $i_k$ is the arrowhead and $j_k$ is the arrowtail, we have the following obvious estimate:
\begin{equation}\label{eq:estimate_term-n}
|\bigl\langle A, G_L\bigr\rangle|\leq  \prod^{n-1}_{k=1}\mathsf{ov}_{i_k,j_k}(D_L)\leq \prod^{n-1}_{k=1}\mathsf{cr}_{i_k,j_k}(D_L).
\end{equation}
Let $N={n \choose 2}$, since every term (a tree diagram) of $Z_{n;1}$ is uniquely determined by its arrows indexed by string components, ${N\choose n-1}$ gives an upper bound for the number of terms in $Z_{n;1}$. Using Lemma \ref{lem:a_i-ineq}, with $k=n-1$, $N$ as above and $a_k=\mathsf{cr}_{i_k,j_k}(D_L)$, $k=1,\ldots, N$, one obtains from \eqref{eq:estimate_term-n}
\begin{equation}\label{eq:Z_n;1-cr}
|\langle Z_{n;1},G_L\rangle|\leq \frac{1}{N^{n-1}} {N \choose n-1}\Bigl(\sum_{i<j} \mathsf{cr}_{i,j}(D_L) \Bigr)^{n-1}.
\end{equation}
\begin{rem}\label{rem:general-arrow-poly}
	{\em 
The estimate \eqref{eq:Z_n;1-cr} is valid for any arrow polynomial, in place of $Z_{n;1}$, which has arrows based on different components and no parallel arrows on a given component.}
\end{rem}
By \eqref{eq:bar-mu-Z_n;1}, we can find $k\in \Z$ such that $\langle Z_{n;1},G_L\rangle=\bar{\mu}_{n;1}+k\,d$. Since
\[
 \Big[\bar{\mu}_{n;1}(D_L)\Big]\leq |\bar{\mu}_{n;1}(D_L)+k\,d|=|\langle Z_{n;1},G_L\rangle|,\qquad\text{for all}\ k\in\Z,
\]
replacing $D_L$ with a diagram obtained by projection of $L$ in a generic direction $v\in S^2$, we rewrite the estimate \eqref{eq:Z_n;1-cr} as follows
\begin{equation}\label{eq:alpha_n-cr}
\alpha_n\sqrt[n-1]{\Big[\bar{\mu}_{n;1}(D_L(v))\Big]}\leq \sum_{i<j} \mathsf{cr}_{i,j}(v),\qquad \alpha_n=\Bigl(\frac{1}{ N^{n-1}} {N \choose n-1}\Bigr)^{\frac{-1}{n-1}}.
\end{equation}
Integrating over the sphere of directions and using invariance\footnote{both $\bar{\mu}_{n;1}$ and $d$ are isotopy invariants.} of $\Big[\bar{\mu}_{n;1}\Big]$ yields
\[
4\pi \alpha_n \sqrt[n-1]{\Big[\bar{\mu}_{n;1}(L)\Big]}\leq \sum_{i<j} \int_{S^2}\mathsf{cr}_{i,j}(v) dv.
\]
By Lemma \ref{lem:aov-bound}, we obtain
\[
\alpha_n \sqrt[n-1]{\Big[\bar{\mu}_{n;1}(L)\Big]}\leq \sum_{i<j} \mathsf{acr}_{i,j}(L)= 2\sum_{i<j} \mathsf{aov}_{i,j}(L)\leq 2\sum_{i< j} \tfrac{1}{3}\min\bigl(\ell_i\ell^{\frac 13}_j,\ell_j\ell^{\frac 13}_i\bigr)\leq \tfrac{1}{3}\sum_{i \neq j} \ell_i\ell^{\frac 13}_j,
\]
since $\sum_{i< j} 2\min\bigl(\ell_i\ell^{\frac 13}_j,\ell_j\ell^{\frac 13}_i\bigr)\leq \sum_{i \neq j} \ell_i\ell^{\frac 13}_j$.
As in derivation of \eqref{eq:lk-jensen} (see Corollary \ref{cor:rop-lk}), by Jensen Inequality: 
\begin{equation}\label{eq:rop-end}
\rop(L)^{\frac 43}\geq \frac{3\, n^{\frac 13}\, \alpha_n}{(n-1)}  \sqrt[n-1]{\Big[\bar{\mu}_{n;1}(L)\Big]}.
\end{equation}
Now, let us estimate the constant $\alpha_n$. Note that 
\[
\frac{N^{n-1}}{{N \choose n-1}}=\frac{N^{n-1}}{N(N-1)\ldots (N-(n-1)+1)} (n-1)!\geq (n-1)!\ .
\]
Again, by Stirling's approximation  (letting $m=n-1$ in \eqref{eq:stirling2}) we obtain for $n\geq 2$:
\begin{equation}\label{eq:alpha_n-stirling}
\alpha_n\geq \bigl((n-1)!\bigr)^{\frac{1}{n-1}}\geq \frac{n-1}{3},
\end{equation}
thus \eqref{eq:rop-end} can be simplified to
\begin{equation}\label{eq:rop-end2}
\rop(L)^{\frac 43}\geq \sqrt[3]{n}  \sqrt[n-1]{\Big[\bar{\mu}_{n;1}(L)\Big]},
\end{equation}
as claimed in the first inequality of Equation \eqref{eq2:mthm-rop-cr}. For a minimal diagram $D^{\min}_L$ of $L$: 
\[
\Crn(L)\geq \sum_{i<j} \mathsf{cr}_{i,j}(D^{\min}_L),
\]
 thus the second inequality of \eqref{eq2:mthm-rop-cr} is an immediate consequence of \eqref{eq:alpha_n-cr}(with $D_L(v)$ replaced by $D^{\min}_L$) and \eqref{eq:alpha_n-stirling}.
Using the permutation identity \eqref{eq:mu-symmetry} and the fact that $\rop(\sigma(L))=\rop(L)$ for any $\sigma\in\Sigma(1,\ldots,n)$, we may replace $\bar{\mu}_{n;1}(L)$ with any other\footnote{there are $(n-2)!$ different top Milnor linking numbers \cite{Milnor:1954}.} top $\bar{\mu}$--invariant of $L$.
\end{proof}
	In the case of almost trivial (Borromean) links $d=0$, and we may slightly improve the estimate in \eqref{eq:Z_n;1-cr} of the above proof, by using cyclic symmetry of $\bar{\mu}$--invariants pointed in \eqref{rem:mu-cyclic}. We have in particular
	\begin{equation}\label{eq:mu-sum-cyclic}
	n\,\bar{\mu}_{23\ldots n;1}(L)=\sum_{\rho, \text{$\rho$ is cyclic}} \bar{\mu}_{\rho(2)\rho(3)\ldots\rho(n);\rho(1)}(L)=\sum_{\rho, \text{$\rho$ is cyclic}} \langle \rho(Z_{n;1}),G_L\rangle.
	\end{equation}
	Since cyclic permutations applied to the terms of $Z_{n;1}$ produce distinct arrow diagrams\footnote{since the trunk of a tree diagram is unique, c.f. \cite{Kravchenko-Polyak:2011}, \cite{Komendarczyk-Michaelides:2016}.}, by Remark \ref{rem:general-arrow-poly}, we obtain the following bound 
	\begin{equation}\label{eq:rho-Z_n;1-cr}
	n\,|\bar{\mu}_{n;1}(L)|\leq \sum_{\rho, \text{$\rho$ is cyclic}} |\langle \rho(Z_{n;1}),G_L\rangle|\leq \frac{1}{N^{n-1}} {N \choose n-1}\Bigl(\sum_{i<j} \mathsf{cr}_{i,j}(D_L) \Bigr)^{n-1}.
	\end{equation}
     Disregarding the Stirling's approximation, we have
     \begin{equation}
     \rop(L)^{\frac 43}\geq \frac{3\sqrt[3]{n}\, \tilde{\alpha}_n}{(n-1)}  \sqrt[n-1]{|\bar{\mu}_{n;1}(L)|},\quad \tilde{\alpha}_n=\Bigl(\frac{1}{n N^{n-1}} {N \choose n-1}\Bigr)^{\frac{-1}{n-1}},
     \end{equation}
     or using the second bound in \eqref{eq:aov-bound}
     \[
     \rop(L)^2\geq 4^3\pi \tilde{\alpha}_n\Bigl(\frac{n^2}{n^2-1}\Bigr)  \sqrt[n-1]{|\bar{\mu}_{n;1}(L)|}.
     \]	
     In particular, for $n=3$, we have $N=3$ and $\tilde{\alpha}_3=3$ and the estimates read
\begin{equation}\label{eq:rop-mu123}
	\rop(L)\geq \Bigl(5\sqrt[3]{3}\sqrt{|\bar{\mu}_{23;1}(L)|}\Bigr)^{\frac 34},\quad \rop(L)\geq 6\sqrt{6\pi}\sqrt[4]{|\bar{\mu}_{23;1}(L)|}.
\end{equation}

\no Since $6\sqrt{6\pi}\approx 26.049$, the second estimate is better for Borromean rings ($\mu_{23;1}=1$) and improves the linking number bound of \eqref{eq:len(L_i)-lk}: $6\pi\approx 18.85$, but fails short of the genus bound \eqref{eq:len(L_i)-Ac-gen}: $12\pi\approx 37.7$. Numerical simulations  suggest that the ropelength of Borromean rings $\approx 58.05$, \cite{Cantarella-Kusner-Sullivan:2002, Buniy-Cantarella-Kephart-Rawdon:2014}.

\begin{rem}\label{rem:method-extended}
{\em
	This methodology can be easily extended to other families of finite type invariants of knots and links.  For illustration, let us consider the third coefficient of the Conway polynomial i.e. $c_3(L)$ of a two component link $L$. The arrow polynomial $C_3$ of $c_3(L)$ is given as follows \cite[p. 779]{Chmutov-Khoury-Rossi:2009}
	\[
	\vvcenteredinclude{.31}{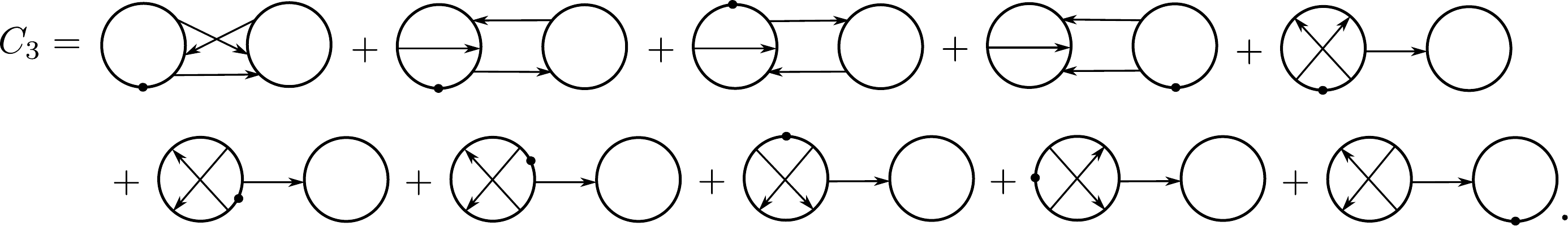}
	\]
    Let $G_L$ be the Gauss diagram obtained from a regular link diagram $D_L$, and $D_{L_k}$ the subdiagram of the $k$th--component of $L$, $k=1,2$. The absolute value of the first term $\langle \vvcenteredinclude{.25}{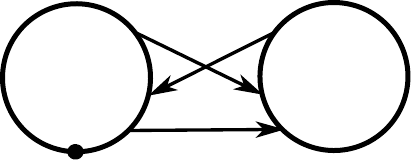}, G_L\rangle$ of $\langle C_3, G_L\rangle$ does not exceed ${\crn_{1,2}(D_L) \choose 3}$, the absolute value of the sum $\langle \vvcenteredinclude{.25}{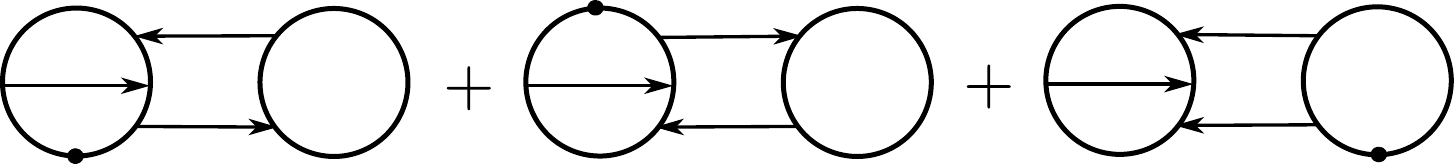}, G_L\rangle$ does not exceed $\crn(D_{L_1}){\crn_{1,2}(D_{L}) \choose 2}$ and for the remaining terms a bound is ${\crn(D_{L_1}) \choose 2}\crn_{1,2}(D_{L})$. Therefore, a rough upper bound for $|\langle C_3, G_L\rangle|$ can be written as 
    \[
      |\langle C_3, G_L\rangle|\leq (\crn_{1,2}(D_{L})+\crn(D_{L_1}))^3 
     \] 
       
\no   Similarly, as in \eqref{eq:alpha_n-cr}, replacing $D_L$ with $D_L(v)$ and integrating over the sphere of directions we obtain
\[
 |c_3(L)|^{\frac 13}\leq \mathsf{acr}_{1,2}(L)+\mathsf{acr}(L_1).   
\]
For a unit thickness link $L$, \eqref{eq:rop-BS-acr} and \eqref{eq:aov-bound} give 
$\mathsf{acr}_{1,2}(L)+\mathsf{acr}(L_1)\leq \frac{1}{3}\ell^{\frac 13}_1\ell_2+\frac{1}{3}\ell^{\frac 13}_2\ell_1+\frac{4}{11}\ell^{\frac 13}_1\ell_1$, and $\mathsf{acr}_{1,2}(L)+\mathsf{acr}(L_1)\leq \frac{\ell^2_1}{16\pi}+\frac{\ell_1\ell_2}{8\pi}$. Thus, for some constants: $\alpha$, $\beta>0$, we have
\[
 \ell(L)^2\geq A\, |c_3(L)|^{\frac 13},\qquad \ell(L)^{\frac{4}{3}}\geq B\, |c_3(L)|^{\frac 13}.
\]
\no In general, given a finite type $n$ invariant $V_n(L)$ and a unit thickness  $m$--link $L$, we may expect constants $\alpha_{m,n}$, $\beta_{m,n}$; such that
\[
 \ell(L)^2\geq \alpha_{m,n}\, |V_n(L)|^{\frac{1}{n}},\qquad \ell(L)^{\frac{4}{3}}\geq \beta_{m,n}\, |V_n(L)|^{\frac{1}{n}}.
\]
}
\end{rem}


\begin{thebibliography}{10}
	
	\bibitem{Abramowitz:1964aa}
	M.~Abramowitz and I.~A. Stegun.
	\newblock {\em Handbook of mathematical functions with formulas, graphs, and
		mathematical tables}, volume~55 of {\em National Bureau of Standards Applied
		Mathematics Series}.
	\newblock For sale by the Superintendent of Documents, U.S. Government Printing
	Office, Washington, D.C., 1964.
	
	\bibitem{Bar-Natan:1991}
	D.~Bar-Natan.
	\newblock {\em Perturbative aspects of the {C}hern-{S}imons topological quantum
		field theory}.
	\newblock ProQuest LLC, Ann Arbor, MI, 1991.
	\newblock Thesis (Ph.D.)--Princeton University.
	
	\bibitem{Bar-Natan:1995}
	D.~Bar-Natan.
	\newblock Polynomial invariants are polynomial.
	\newblock {\em Math. Res. Lett.}, 2(3):239--246, 1995.
	
	\bibitem{Bott-Taubes:1994}
	R.~Bott and C.~Taubes.
	\newblock On the self-linking of knots.
	\newblock {\em J. Math. Phys.}, 35(10):5247--5287, 1994.
	\newblock Topology and physics.
	
	\bibitem{Buck-Simon:1999}
	G.~Buck and J.~Simon.
	\newblock Thickness and crossing number of knots.
	\newblock {\em Topology Appl.}, 91(3):245--257, 1999.
	
	\bibitem{Buck-Simon:2007}
	G.~R. Buck and J.~K. Simon.
	\newblock Total curvature and packing of knots.
	\newblock {\em Topology Appl.}, 154(1):192--204, 2007.
	
	\bibitem{Buniy-Cantarella-Kephart-Rawdon:2014}
	R.~V. Buniy, J.~Cantarella, T.~W. Kephart, and E.~J. Rawdon.
	\newblock Tight knot spectrum in qcd.
	\newblock {\em Phys. Rev. D}, 89:054513, Mar 2014.
	
	\bibitem{Cantarella:2009}
	J.~Cantarella.
	\newblock Introduction to geometric knot theory 1 and 2.
	\newblock {\em ICTP Knot Theory Summer School, Trieste}, 2009.
	
	\bibitem{Cantarella-Kusner-Sullivan:2002}
	J.~Cantarella, R.~B. Kusner, and J.~M. Sullivan.
	\newblock On the minimum ropelength of knots and links.
	\newblock {\em Invent. Math.}, 150(2):257--286, 2002.
	
	\bibitem{Chmutov-Duzhin-Mostovoy:2012}
	S.~Chmutov, S.~Duzhin, and J.~Mostovoy.
	\newblock {\em Introduction to {V}assiliev knot invariants}.
	\newblock Cambridge University Press, Cambridge, 2012.
	
	\bibitem{Chmutov-Khoury-Rossi:2009}
	S.~Chmutov, M.~C. Khoury, and A.~Rossi.
	\newblock Polyak-{V}iro formulas for coefficients of the {C}onway polynomial.
	\newblock {\em J. Knot Theory Ramifications}, 18(6):773--783, 2009.
	
	\bibitem{Diao:2003}
	Y.~Diao.
	\newblock The lower bounds of the lengths of thick knots.
	\newblock {\em J. Knot Theory Ramifications}, 12(1):1--16, 2003.
	
	\bibitem{Diao-Ernst:2007}
	Y.~Diao and C.~Ernst.
	\newblock Total curvature, ropelength and crossing number of thick knots.
	\newblock {\em Math. Proc. Cambridge Philos. Soc.}, 143(1):41--55, 2007.
	
	\bibitem{Diao-Ernst-Janse-van-Rensburg:1999}
	Y.~Diao, C.~Ernst, and E.~J. Janse~van Rensburg.
	\newblock Thicknesses of knots.
	\newblock {\em Math. Proc. Cambridge Philos. Soc.}, 126(2):293--310, 1999.
	
	\bibitem{Diao-Ernst-Janse-Van-Rensburg:2002}
	Y.~Diao, C.~Ernst, and E.~J. Janse Van~Rensburg.
	\newblock Upper bounds on linking numbers of thick links.
	\newblock {\em J. Knot Theory Ramifications}, 11(2):199--210, 2002.
	
	\bibitem{Ernst-Por:2012}
	C.~Ernst and A.~Por.
	\newblock Average crossing number, total curvature and ropelength of thick
	knots.
	\newblock {\em J. Knot Theory Ramifications}, 21(3):1250028, 9, 2012.
	
	\bibitem{Federer:1969}
	H.~Federer.
	\newblock {\em Geometric measure theory}.
	\newblock Die Grundlehren der mathematischen Wissenschaften, Band 153.
	Springer-Verlag New York Inc., New York, 1969.
	
	\bibitem{Freedman-Krushkal:2014}
	M.~Freedman and V.~Krushkal.
	\newblock Geometric complexity of embeddings in {$\Bbb{R}^d$}.
	\newblock {\em Geom. Funct. Anal.}, 24(5):1406--1430, 2014.
	
	\bibitem{Freedman-He:1991}
	M.~H. Freedman and Z.-X. He.
	\newblock Divergence-free fields: energy and asymptotic crossing number.
	\newblock {\em Ann. of Math. (2)}, 134(1):189--229, 1991.
	
	\bibitem{Manchon:2012}
	P.~M. Gonz{{\'a}}lez~Manch{{\'o}}n.
	\newblock Homogeneous links and the {S}eifert matrix.
	\newblock {\em Pacific J. Math.}, 255(2):373--392, 2012.
	
	\bibitem{Goussarov-Polyak-Viro:2000}
	M.~Goussarov, M.~Polyak, and O.~Viro.
	\newblock Finite-type invariants of classical and virtual knots.
	\newblock {\em Topology}, 39(5):1045--1068, 2000.
	
	\bibitem{Gray:2004}
	A.~Gray.
	\newblock {\em Tubes}, volume 221 of {\em Progress in Mathematics}.
	\newblock Birkh{\"a}user Verlag, Basel, second edition, 2004.
	\newblock With a preface by Vicente Miquel.
	
	\bibitem{Gromov:1983}
	M.~Gromov.
	\newblock Filling {R}iemannian manifolds.
	\newblock {\em J. Differential Geom.}, 18(1):1--147, 1983.
	
	\bibitem{Gromov-Guth:2012}
	M.~Gromov and L.~Guth.
	\newblock Generalizations of the {K}olmogorov-{B}arzdin embedding estimates.
	\newblock {\em Duke Math. J.}, 161(13):2549--2603, 2012.
	
	\bibitem{Kawauchi:1996}
	A.~Kawauchi.
	\newblock {\em A survey of knot theory}.
	\newblock Birkh\"auser Verlag, Basel, 1996.
	\newblock Translated and revised from the 1990 Japanese original by the author.
	
	\bibitem{Komendarczyk-Michaelides:2016}
	R.~Komendarczyk and A.~Michaelides.
	\newblock Tree invariants and {M}ilnor linking numbers with indeterminacy.
	\newblock {\em {\tt arxiv.org/abs/1602.06449}}, 2016.
	
	\bibitem{Kotorii:2013}
	Y.~Kotorii.
	\newblock The {M}ilnor {$\overline{\mu}$} invariants and nanophrases.
	\newblock {\em J. Knot Theory Ramifications}, 22(2):1250142, 28, 2013.
	
	\bibitem{Kravchenko-Polyak:2011}
	O.~Kravchenko and M.~Polyak.
	\newblock Diassociative algebras and {M}ilnor's invariants for tangles.
	\newblock {\em Lett. Math. Phys.}, 95(3):297--316, 2011.
	
	\bibitem{Lieb-Loss:2001}
	E.~H. Lieb and M.~Loss.
	\newblock {\em Analysis}, volume~14 of {\em Graduate Studies in Mathematics}.
	\newblock American Mathematical Society, Providence, RI, second edition, 2001.
	
	\bibitem{Lin-Wang:1996}
	X.-S. Lin and Z.~Wang.
	\newblock Integral geometry of plane curves and knot invariants.
	\newblock {\em J. Differential Geom.}, 44(1):74--95, 1996.
	
	\bibitem{Litherland-Simon-Durumeric-Rawdon:1999}
	R.~A. Litherland, J.~Simon, O.~Durumeric, and E.~Rawdon.
	\newblock Thickness of knots.
	\newblock {\em Topology Appl.}, 91(3):233--244, 1999.
	
	\bibitem{Maggioni-Ricca:2009}
	F.~Maggioni and R.~L. Ricca.
	\newblock On the groundstate energy of tight knots.
	\newblock {\em Proc. R. Soc. Lond. Ser. A Math. Phys. Eng. Sci.},
	465(2109):2761--2783, 2009.
	
	\bibitem{Michaelides:2015}
	A.~Michaelides.
	\newblock Lower bounds for ropelength of links via higher linking numbers and
	other finite type invariants.
	\newblock {\em (Ph.D. thesis) Tulane University.}, 2015.
	
	\bibitem{Milnor:1954}
	J.~Milnor.
	\newblock Link groups.
	\newblock {\em Ann. of Math. (2)}, 59:177--195, 1954.
	
	\bibitem{Milnor:1957}
	J.~Milnor.
	\newblock Isotopy of links. {A}lgebraic geometry and topology.
	\newblock In {\em A symposium in honor of {S}. {L}efschetz}, pages 280--306.
	Princeton University Press, Princeton, N. J., 1957.
	
	\bibitem{Polyak:2000}
	M.~Polyak.
	\newblock On the algebra of arrow diagrams.
	\newblock {\em Lett. Math. Phys.}, 51(4):275--291, 2000.
	
	\bibitem{Polyak-Viro:1994}
	M.~Polyak and O.~Viro.
	\newblock Gauss diagram formulas for {V}assiliev invariants.
	\newblock {\em Internat. Math. Res. Notices}, (11):445ff., approx.\ 8 pp.\
	(electronic), 1994.
	
	\bibitem{Polyak-Viro:2001}
	M.~Polyak and O.~Viro.
	\newblock On the {C}asson knot invariant.
	\newblock {\em J. Knot Theory Ramifications}, 10(5):711--738, 2001.
	\newblock Knots in Hellas '98, Vol. 3 (Delphi).
	
	\bibitem{Rawdon:1998}
	E.~J. Rawdon.
	\newblock Approximating the thickness of a knot.
	\newblock In {\em Ideal knots}, volume~19 of {\em Ser. Knots Everything}, pages
	143--150. World Sci. Publ., River Edge, NJ, 1998.
	
	\bibitem{Ricca-Maggioni:2014}
	R.~L. Ricca and F.~Maggioni.
	\newblock On the groundstate energy spectrum of magnetic knots and links.
	\newblock {\em J. Phys. A}, 47(20):205501, 9, 2014.
	
	\bibitem{Ricca-Moffatt:1992}
	R.~L. Ricca and H.~K. Moffatt.
	\newblock The helicity of a knotted vortex filament.
	\newblock In {\em Topological aspects of the dynamics of fluids and plasmas
		({S}anta {B}arbara, {CA}, 1991)}, volume 218 of {\em NATO Adv. Sci. Inst.
		Ser. E Appl. Sci.}, pages 225--236. Kluwer Acad. Publ., Dordrecht, 1992.
	
	\bibitem{Volic:2007}
	I.~Voli{\'c}.
	\newblock A survey of {B}ott-{T}aubes integration.
	\newblock {\em J. Knot Theory Ramifications}, 16(1):1--42, 2007.
	
	\bibitem{Willerton:2002}
	S.~Willerton.
	\newblock On the first two {V}assiliev invariants.
	\newblock {\em Experiment. Math.}, 11(2):289--296, 2002.
	
\end{thebibliography}
\end{document}